\newtheorem{theo}{Theorem}
\newtheorem{proposition}[theo]{Proposition}
\newtheorem{lemme}[theo]{Lemma}
\newtheorem{corollaire}[theo]{Corollary}
\newtheorem*{definition}{Definition}
\newtheorem*{remarque}{Remark}
\newcommand{\loc}{\mathrm{loc}}
\newcommand{\dd}{\mathrm{d}}
\newcommand{\pp}{\text{.}}
\newcommand{\vv}{\text{,}}
\newcommand{\dt}{\partial_t}
\newcommand{\bb}{\mathbf}
\DeclareMathOperator{\trace}{Tr}
\DeclareMathOperator{\supp}{supp}
\begin{document}
\begin{center}{\Large{\textbf{Best exponential decay rate of energy for the vectorial damped wave equation }}}

\end{center}
\bigskip
\begin{center}
Guillaume Klein
\end{center}

\bigskip
\bigskip

\begin{abstract}
The energy of solutions of the scalar damped wave equation decays uniformly exponentially fast when the geometric control condition is satisfied. A theorem of Lebeau \cite{leb93} gives an expression of this exponential decay rate in terms of the average value of the damping terms along geodesics and of the spectrum of the infinitesimal generator of the equation. The aim of this text is to generalize this result in the setting of a vectorial damped wave equation on a Riemannian manifold with no boundary. We obtain an expression analogous to Lebeau's one but new phenomena like high frequency overdamping arise in comparison to the scalar setting.  We also prove a necessary and sufficient condition for the strong stabilization of the vectorial wave equation. 
\end{abstract}

\section{Introduction}

Let $(M,g)$ be a smooth, connected, compact Riemannian manifold without boundary of dimension $d$. Let $\Delta$  be the Laplace-Beltrami's operator on $M$ for the metric $g$ and let $a$ be a smooth function from $M$ to $\mathscr{H}_n^+(\mathbf C)$, the space of  positive-semidefinite hermitian matrices of dimension $n$. We are interested in the following system of equations
\begin{equation}\label{dampedwaveequation}
\left\lbrace
\begin{array}{l}
 (\partial_t^2 -\Delta +2a(x)\partial_t)u=0 \; \text{ in } \; \mathcal D'(\mathbf R\times M)^n \\

u_{|t=0}=u_0\in H^1(M)^n\; \text{ and } \; \partial_tu_{|t=0}=u_1\in L^2(M)^n\pp
\end{array}
\right.
\end{equation}
Let $H=H^1(M)^n\oplus L^2(M)^n$ and define on $H$ the unbounded operator
\[
A_a=\begin{pmatrix}
0 & \mathrm{Id}_n \\
\Delta & -2a
\end{pmatrix} \text{ of domain } D(A_a)=H^2(M)^n\oplus H^1(M)^n\pp
\]
By application of Hille-Yosida's theorem to $A_a$ the system \eqref{dampedwaveequation} has a unique solution in the space $ C^0(\mathbf R,H^1(M)^n)\cap C^1(\mathbf R, L^2(M)^n)$, from now on we will identify $H$ with the space of solutions of \eqref{dampedwaveequation}. The euclidean norm on $\mathbf R^n $ or $\mathbf C^n$ will be written  $|\cdot|$ and we will write $\langle\cdot ,\cdot\rangle_{\mathcal H}$ the inner product of an Hilbert space $\mathcal H$ or simply $\langle\cdot,\cdot\rangle$ when there is no possible confusion. Let us define $E(u,t)$, the energy of a solution $u$ at time $t$, by the formula 
\[
E(u,t)=\frac{1}{2}\int_{M}|\partial_t u(t,x)|^2 + |\nabla u(t,x)|^2 \dd x
\]
where $|\nabla u(t,x)|^2=g_x(\nabla u(t,x),\nabla u(t,x))$. We then have the relation
\begin{equation}\label{formuleenergie}
E(u,T)=E(u,0)-\int_0^T\int_{M}\big\langle 2a(x)\partial_t u(t,x),\partial_t u(t,x)\big\rangle_{\mathbf C^n}\dd x \dd t \pp
\end{equation}
The energy is thus a non-increasing function of time. We are interested in the problem of stabilization of the wave equation; that is, determining the long time behavior of the energy. This has been well studied in the scalar setting ($n=1$) but not so much in the vectorial setting ($n>1$). Nevertheless, the stabilization of the vectorial wave equation is an interesting and naturally occurring problem. The aim of this article is to adapt and prove some classical results of scalar stabilization to the vectorial case, we will also highlight the main differences between the two settings. The most basic result about stabilization of the wave equation is probably the following.

\begin{theo}\label{stabfaible}
The following conditions are equivalent.
\begin{description}
\item[(i)] $\forall u \in H \; \; \displaystyle \lim_{t\to \infty}E(u,t)=0$
\item[(ii)] The only eigenvalue of $A_a$ on the imaginary axis is $0$.
\end{description}
Moreover, if $a$ is definite positive at one point (and thus on an open set) then the two conditions above are satisfied.
\end{theo}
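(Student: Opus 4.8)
The plan is to establish the two implications separately: $(i)\Rightarrow(ii)$ is elementary, while $(ii)\Rightarrow(i)$ rests on a spectral analysis of $A_a$ combined with the Arendt--Batty--Lyubich--Vu theorem on asymptotic stability of $C_0$-semigroups; the sufficient condition is then obtained from a unique continuation argument. For $(i)\Rightarrow(ii)$: if $i\beta$ with $\beta\neq 0$ is an eigenvalue of $A_a$ with eigenvector $(u_0,u_1)$, then the first line of $A_a(u_0,u_1)=i\beta(u_0,u_1)$ gives $u_1=i\beta u_0$, forcing $u_0\neq 0$ (otherwise the eigenvector vanishes), and the solution with this initial datum is $u(t)=e^{i\beta t}u_0$. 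A direct computation from the definition of the energy gives $E(u,t)=\tfrac12(\beta^2\|u_0\|_{L^2}^2+\|\nabla u_0\|_{L^2}^2)$, which is independent of $t$ and at least $\tfrac12\beta^2\|u_0\|_{L^2}^2>0$, contradicting $(i)$. Since $A_a$ always has $0$ as an eigenvalue, with eigenvectors the pairs $(c,0)$, $c\in\mathbf C^n$ constant, this yields $(i)\Rightarrow(ii)$.

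For $(ii)\Rightarrow(i)$ I would work in the quotient $\mathcal H=H/V$, where $V=\{(c,0):c\in\mathbf C^n\}=\ker A_a$, normed by $\|[(w_0,w_1)]\|_{\mathcal H}^2=\|\nabla w_0\|_{L^2}^2+\|w_1\|_{L^2}^2$ (a Hilbert norm by the Poincaré inequality). Then $E(u,t)=\tfrac12\|T(t)[(u_0,u_1)]\|_{\mathcal H}^2$, where $T(t)$ is the $C_0$-semigroup induced by $e^{tA_a}$ on $\mathcal H$ --- well defined since $V$ is invariant, and a contraction since $E$ is non-increasing --- with generator $\tilde A$. As $D(A_a)=H^2(M)^n\oplus H^1(M)^n$ embeds compactly into $H$ by Rellich's theorem, $A_a$ and hence $\tilde A$ have compact resolvent, so $\sigma(\tilde A)$ is a discrete set of eigenvalues of finite multiplicity. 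I claim $(ii)$ forces $\sigma(\tilde A)\cap i\mathbf R=\emptyset$. On the one hand, any eigenvalue $i\beta$ of $\tilde A$ with $\beta\neq 0$ lifts to an eigenvalue of $A_a$ (adjust a representative of the eigenvector by a suitable element of $V$), which $(ii)$ excludes. On the other hand, $0\in\sigma(\tilde A)$ iff $\tilde A$ is not injective, i.e.\ iff $\Delta u_0=2a(x)c$ is solvable for some $c\neq 0$, i.e.\ iff $\int_M a(x)\,\dd x$ is singular; but since $a\geq 0$ its kernel coincides with the common kernel $N=\bigcap_{x\in M}\ker a(x)$, and if $c\in N\setminus\{0\}$ and $\phi$ is a nonconstant eigenfunction of $-\Delta$ with eigenvalue $\mu>0$, then $(\phi c,\,i\sqrt{\mu}\,\phi c)$ is an eigenvector of $A_a$ for $i\sqrt{\mu}\neq 0$, again forbidden by $(ii)$. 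Hence $N=\{0\}$, $\int_M a$ is invertible, $0\notin\sigma(\tilde A)$, and therefore $\sigma(\tilde A)\cap i\mathbf R=\emptyset$.

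Since $\sigma(\tilde A)\cap i\mathbf R=\emptyset$ is in particular countable, and $\sigma_p(\tilde A^*)\cap i\mathbf R=\emptyset$ as well (the spectrum of $\tilde A^*$ is the complex conjugate of that of $\tilde A$), the Arendt--Batty--Lyubich--Vu theorem applies to the bounded semigroup $T(t)$ on the Hilbert space $\mathcal H$ and gives $\|T(t)x\|_{\mathcal H}\to 0$ for every $x$, i.e.\ $E(u,t)\to 0$ for every $u\in H$, which is $(i)$. For the sufficient condition, if $a(x_0)$ is positive definite then $a>0$ on an open set $U\ni x_0$, the lowest eigenvalue of $a(\cdot)$ being continuous; if $i\beta$ with $\beta\neq 0$ were an eigenvalue of $A_a$, its eigenfunction $u_0$ would satisfy $\Delta u_0+\beta^2 u_0=2i\beta a(x)u_0$, and pairing with $u_0$ in $L^2(M)^n$ and taking imaginary parts gives $\int_M\langle a(x)u_0(x),u_0(x)\rangle_{\mathbf C^n}\,\dd x=0$, whence $u_0\equiv 0$ on $U$; as $u_0$ then solves an elliptic system with $|\Delta u_0|\leq C|u_0|$ and vanishes on the open subset $U$ of the connected manifold $M$, Aronszajn's unique continuation theorem forces $u_0\equiv 0$, a contradiction. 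So $A_a$ has no eigenvalue on $i\mathbf R$ other than $0$, i.e.\ $(ii)$ holds.

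The step I expect to be the main obstacle is the second half of the spectral analysis in $(ii)\Rightarrow(i)$: recognizing that condition $(ii)$, which a priori only constrains eigenvalues, already rules out a nontrivial common kernel of the damping --- otherwise the constant-energy solutions $t\mapsto u_0+tc$ with $c\in N\setminus\{0\}$ survive and $(i)$ fails --- and then packaging the compact-resolvent picture so that the Arendt--Batty--Lyubich--Vu criterion applies on the correct quotient space. The unique continuation input for vector-valued functions is classical and used here as a black box.
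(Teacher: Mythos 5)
Your argument is correct, but it does not follow "the paper's proof'' because the paper offers none: Theorem \ref{stabfaible} is quoted with a pointer to the introduction of \cite{leb93} and to Theorem 4.2 of \cite{buge01}, where the result is obtained (in the scalar or simpler settings) by LaSalle-type compactness/invariance arguments or by decomposition over generalized eigenvectors. Your route --- quotienting $H$ by $V=\ker A_a=\{(c,0)\}$ so that the energy becomes a genuine Hilbert norm, observing that the induced semigroup is a contraction with compact resolvent, identifying $\sigma(\tilde A)\cap i\mathbf R$ with imaginary eigenvalues of $A_a$, and invoking Arendt--Batty--Lyubich--Vu --- is a clean, self-contained alternative, and it handles correctly the one point that is genuinely vectorial: the possible nontrivial common kernel $N=\bigcap_x\ker a(x)$, which you exclude under \textbf{(ii)} via the explicit eigenvectors $(\phi c,\,i\sqrt{\mu}\,\phi c)$; this is exactly the mechanism the paper later exploits to show that \eqref{GCC} alone does not give weak stabilization when $n>1$. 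Working on the quotient is also essential for your use of ABLV, since $e^{tA_a}$ is not bounded on $H$ itself (the modes $u_0+tc$ grow in $L^2$), and you place the boundedness hypothesis where it actually holds. Two steps deserve one explicit line each but are routine: that compactness of the resolvent passes from $A_a$ to $\tilde A$ (normalize representatives to have mean-zero first component and use elliptic regularity, with $\rho(\tilde A)\supset\{\mathfrak{Re}\,\lambda>0\}$ from the contraction property), and that Aronszajn's unique continuation theorem applies to the vector-valued inequality $|\Delta u_0|\leq C|u_0|$, which it does since the principal part is the scalar Laplacian times the identity. The "moreover'' part via the imaginary-part identity and unique continuation is the standard argument and is fine.
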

The condition \textbf{(i)} is called weak stabilisation of the damped wave equation. For a succinct proof of this result see the introduction of \cite{leb93}, for a more detailed proof in a simpler setting see Theorem 4.2 of \cite{buge01}. Note that when $n=1$ (\textit{ie} in the scalar case) there is a more satisfactory result stating that the condition \textbf{(i)} is equivalent to $a\neq 0$.

\begin{theo}\label{stabforte}
The following conditions are equivalent.

\textbf{\textup{(i)}} There is weak stabilisation and for every maximal geodesic $s\in \mathbf R \mapsto x_s$ of $M$ we have
\[
\bigcap_{s\in \mathbf R} \ker(a(x_s))=\{0\}.\label{GCC}\tag{GCC}
\]

\textbf{\textup{(ii)}} There exists two constants $C,\beta >0$ such that for all $u\in H$ and for every time $t$  
\[
E(u,t)\leq C e^{-\beta t} E(u,0)\pp
\]

\end{theo}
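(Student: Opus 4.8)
The plan is to reduce Theorem~\ref{stabforte} to a uniform observability estimate, and then to treat the two implications separately: necessity by a Gaussian beam construction, sufficiency by a matrix-valued microlocal defect measure argument.

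\textbf{Step 1: reduction to observability.} Using the energy identity \eqref{formuleenergie}, I claim that \textbf{(ii)} is equivalent to the existence of $T,C>0$ such that
\[
E(u,0)\leq C\int_0^T\int_M\big\langle 2a(x)\partial_t u,\partial_t u\big\rangle\,\dd x\,\dd t\qquad\text{for all }u\in H,
\]
a statement I will call $\mathrm{Obs}(T)$. Indeed \eqref{formuleenergie} rewrites the right-hand side as $C\,(E(u,0)-E(u,T))$, so $\mathrm{Obs}(T)$ gives $E(u,T)\leq(1-C^{-1})E(u,0)$, and iterating this using the time-translation invariance of \eqref{dampedwaveequation} and the monotonicity of $t\mapsto E(u,t)$ yields the exponential bound of \textbf{(ii)}; conversely \textbf{(ii)} gives $\mathrm{Obs}(T)$ for any $T$ with $Ce^{-\beta T}<1$. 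Since $t\mapsto E(u,t)$ is non-increasing, $\mathrm{Obs}(T)$ for one $T$ implies it for all larger $T$, so it suffices to prove that \textbf{(i)} holds if and only if $\mathrm{Obs}(T)$ holds for $T$ large.

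\textbf{Step 2: necessity ($\lnot$\textbf{(i)}$\Rightarrow\lnot$\textbf{(ii)}).} If weak stabilisation fails, $A_a$ has an eigenvalue $i\lambda$ with $\lambda\neq 0$ and eigenvector $\varphi$; then $e^{i\lambda t}\varphi$ is a solution with positive constant energy, so \textbf{(ii)} fails. If weak stabilisation holds but \eqref{GCC} fails, choose a maximal geodesic $s\mapsto x_s$ and $v\in\mathbf{C}^n\setminus\{0\}$ with $a(x_s)v=0$ for every $s$. On a fixed interval $[0,T]$ I would build, for $h=h_k\to 0$, a Gaussian beam $u_k=e^{i\psi/h}\big(v\,\chi+hr_k+\cdots\big)$ concentrated in an $O(\sqrt h)$-tube around $t\mapsto(t,x_t)$, with $\psi$ solving the complex eikonal equation (its imaginary part, built from Jacobi fields, giving the transverse Gaussian profile and absorbing conjugate points) and the amplitude solving the usual transport equation. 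Since $a(x)v=O(\mathrm{dist}(x,x_t))$ on the beam, the damping contribution is negligible compared with $E(u_k,0)$, so $\int_0^T\int_M\langle 2a\partial_t u_k,\partial_t u_k\rangle/E(u_k,0)\to 0$; correcting $u_k$ to an exact solution of \eqref{dampedwaveequation} via Duhamel and an energy estimate, one sees $\mathrm{Obs}(T)$ fails for every $T$.

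\textbf{Step 3: sufficiency (\textbf{(i)}$\Rightarrow$\textbf{(ii)}).} Argue by contradiction: if $\mathrm{Obs}(T)$ fails for all $T$, pick $T_k\to\infty$ and $u_k\in H$ with $E(u_k,0)=1$ and $\int_0^{T_k}\int_M\langle 2a\partial_t u_k,\partial_t u_k\rangle\to 0$. Let $u_\infty$ be a weak limit in $H$; it solves \eqref{dampedwaveequation}, and by lower semicontinuity $\int_0^T\int_M\langle 2a\partial_t u_\infty,\partial_t u_\infty\rangle=0$ for every $T$, hence $a\,\partial_t u_\infty\equiv 0$, so $u_\infty$ solves the free wave equation with conserved energy; weak stabilisation then forces $E(u_\infty,\cdot)\equiv 0$, i.e.\ $u_\infty$ is a constant. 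Replacing $u_k$ by $u_k-u_\infty$ I may assume $u_k\rightharpoonup 0$ in $H$, still with $E(u_k,0)\to 1$ and damping integral $\to 0$. By Rellich $u_k\to 0$ in $L^2_{\mathrm{loc}}$, so the entire energy passes into the matrix-valued microlocal defect measure $\mu$ of $(u_k)$ on $S^*(\mathbf{R}\times M)$, which is therefore nonzero; $\mu$ is supported in the characteristic set $\{\tau^2=|\xi|_g^2\}$, satisfies $\mathrm{tr}\big(a(x)\,\dd\mu\big)=0$ hence $\mathrm{Ran}\,\mu\subseteq\ker a(x)$ $\mu$-a.e., and obeys the propagation law for the vectorial damped operator whose dissipative part is the Lyapunov term $2|\xi|\,(a\mu+\mu a)$, which vanishes on $\mathrm{supp}\,\mu$. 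Thus $\mu$ is invariant under the bicharacteristic flow, along which the polarization is constant because $\Delta$ acts componentwise (the underlying connection being trivial). Taking a point of $\mathrm{supp}\,\mu$ with polarization $N_0\neq 0$, its bicharacteristic projects onto a maximal geodesic $s\mapsto x_s$ with $\mathrm{Ran}\,N_0\subseteq\ker a(x_s)$ for all $s\in\mathbf{R}$; by \eqref{GCC}, $\bigcap_s\ker a(x_s)=\{0\}$, so $N_0=0$, a contradiction. Hence $\mathrm{Obs}(T)$ holds for $T$ large and \textbf{(ii)} follows.

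\textbf{Main obstacle.} The delicate point is the microlocal part of Step~3: constructing the matrix-valued microlocal defect measure for the sequence of solutions, establishing its propagation law with the correct Lyapunov-type dissipative term, and identifying the polarization transport as trivial for the componentwise Laplacian (so that \eqref{GCC} can be applied directly, without parallel transport). The accounting relating the mass of $\mu$ to the energy, and the finite-time Gaussian beam construction of Step~2 (Jacobi fields, conjugate points via the complex phase), are more routine but still require care.
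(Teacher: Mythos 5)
Your argument is essentially correct, but it takes a genuinely different route from the paper. The paper does not prove Theorem~\ref{stabforte} directly: it first establishes the quantitative formula $\alpha=2\min\{-D_0,C_\infty\}$ of Theorem~\ref{bigtheoreme} (Gaussian beams carrying the amplitude $G_t^+\omega$ for the upper bound; the propagation law $\varphi_s^*\mu=G_{-s}\mu G_{-s}^*$ for the cocycle $G^+$, the high-frequency inequality of Proposition~\ref{lemmeinegaliteahauteenergie}, and a spectral high/low frequency splitting via the projectors $\Pi$ and the completeness of the $E_{\lambda_j}^*$ for the lower bound), and then reads off Theorem~\ref{stabforte} by showing that \eqref{GCC} is equivalent to $C_\infty>0$ and that weak stabilization together with $C_\infty\le -D_\infty$ forces $-D_0>0$. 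You instead run a direct compactness--uniqueness proof of the observability inequality: your Step~2 is a qualitative version of the paper's beam construction (polarization $v$ frozen in the common kernel rather than propagated by $G_t^+$), and your Step~3 replaces both the quantitative estimate $\trace[G^+MG^{+*}]\le\|G^+\|_2^2\trace M$ and the spectral decomposition by the classical scheme ``weak limit killed by weak stabilization $+$ Rellich $+$ flow-invariant defect measure''. What you lose is the decay rate (you get no information on $\alpha$); what you gain is a shorter, self-contained proof of the equivalence that does not need the completeness of the generalized eigenvectors.

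One point needs repair. In Step~3 your hypothesis only controls $\int_0^{T_k}\int_M\langle a\partial_t u_k,\partial_t u_k\rangle$, so the identities $aM=Ma=0$ and the flow-invariance of $\mu$ are only available on $\{t>0\}$; following the bicharacteristic of a point of $\supp\mu$ you therefore obtain $\mathrm{Ran}\,N_0\subseteq\ker a(x_s)$ only for $s$ in a half-line, not for all $s\in\mathbf R$ as you assert, and a half-line does not formally contradict \eqref{GCC} as stated. This is fixable: either note that on a compact manifold the two-sided condition \eqref{GCC} implies its one-sided analogue (if $a(x_s)v=0$ for all $s\ge s_0$, pass to the $\omega$-limit set of the orbit in $S^*M$, which is flow-invariant and on whose projection $a(\cdot)v\equiv 0$, yielding a full geodesic violating \eqref{GCC}), or equivalently use the shifting/compactness argument that the paper itself invokes to produce a uniform time $T$ with $\int_0^T\langle a(x_t)y,y\rangle\,\dd t>0$. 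With that addition, and modulo the technical work you yourself flag (the matrix-valued propagation law, which is exactly the content of Section~\ref{section2} of the paper), the proof goes through.
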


The condition on the intersections of the kernels of $a(x_t)$ is called the Geometric Control Condition (GCC) and the condition \textbf{(ii)} is called strong stabilisation of the damped wave equation. For $n=1$ this theorem has been proved in the more general setting of a riemannian manifold with boundary by Bardos, Lebeau, Rauch and Taylor (\cite{rata74} and \cite{blr92}). Note that, when $n=1$, the weak stabilization hypothesis is not needed because it is a consequence of the geometric control condition. However when $n>1$ the geometric condition alone does not imply strong or even weak stabilization as we shall see later, so this hypothesis is necessary. It is still an open problem to find a purely geometric condition equivalent to strong stabilization of the vectorial wave equation. To my knowledge Theorem \ref{stabforte} has not been proved in the existent literature, but it seems that it was already known by people well acquainted with the field. We will get a proof of Theorem \ref{stabforte} as a corollary of Theorem \ref{bigtheoreme}.

\begin{definition}
We denote the best exponential decay rate of the energy by $\alpha$ defined as follow : 
\[\alpha=\sup \{\beta \in \mathbf R : \exists C>0 , \forall u\in H, \forall T>0 , E(u,T)\leq C e^{-\beta T}E(u,0)\}\pp\]
\end{definition}

The main result of this article is Theorem \ref{bigtheoreme}, its aim is to express $\alpha$ as the minimum of two quantities. The first quantity depends on the spectrum of $A_a$ and the second one depends on a differential equation described by the values of $a$ along geodesics. However we still need to define a few things before being able to state Theorem \ref{bigtheoreme}.

\paragraph*{} It is well known that $\mathrm{sp}(A_a)$, the spectrum of $A_a$, is discrete and solely contains eigenvalues $\lambda_j$ satisfying $\mathfrak{Re}(\lambda_j)\in [-2\sup_{x\in M}\|a(x)\|_2;0]$ and $|\lambda_j|\to \infty$. This comes from the fact that $D(A_a)$ is compactly embedded in $H$ and that, for $\mathfrak{Re }(\lambda)\notin [-2\sup_{x\in M}\|a(x)\|_2;0] $, the operator $(A_a-\lambda \mathrm{Id})$ is bijective from $D(A_a) $ to $H$ and has a continuous inverse. Moreover the spectrum of $A_a$ is invariant by complex conjugation. We will denote by $E_{\lambda_j}$ the generalized eigenvector subspace of $A_a$ associated with $\lambda_j$, this subspace is defined as
\[
E_{\lambda_j}=\left\{u\in D(A_a) : \exists k \in \mathbf{N} , (A_a-\lambda_j)^ku=0 \right\}
\]
and is of finite dimension. We next define the following quantities.
\begin{equation}
D(R)=\sup \{ \mathfrak{Re}(\lambda_j) : \lambda_j \in \mathrm{sp}(A_a), |\lambda_j| > R \}\vv \;\; D_0=\lim_{R\to 0^+} D(R) \; \text{ and } \; D_\infty=\lim_{R\to\infty}D(R)\pp
\end{equation}
These quantities are all non negative and for every $R>0$ we have $D_0\geq D(R)\geq D_\infty$. The quantity $D_0$ is sometime called the spectral abscissa of $A_a$.

\paragraph*{}Since $M$ is a Riemannian manifold there is a natural isometry between $T_xM$ and $T^*_xM$ \textit{via} the scalar product $g_x$. The scalar product defined on $T^*_xM$ by this isometry is called $g^x$ and if $\xi\in T_x^*M$ we will write $|\xi|_g$ for $\sqrt{g^x(\xi,\xi)}$.   Let us call $S^*M$ the  cotangent sphere bundle of $M$, that is, the subset $\{(x,\xi)\in T^*M : |\xi|_g=1/2\}$ of $T^*M$. We call $\phi$ the geodesic flow on $S^*M$ and recall that it corresponds to the Hamiltonian flow generated by $|\xi|_g^2$. In everything that follows $(x_0;\xi_0)$ will denote a point of $S^*M$ and we will write $(x_t,\xi_t)$ for $\phi_t(x_0,\xi_0)$. We now introduce the function $G^+_t : S^* M \to \mathscr M_n(\mathbf C)$ where $t$ is a real number. It is defined as the solution of the differential equation
\begin{equation}\label{eq:equationG}
\left\lbrace
\begin{array}{l}
G^+_0(x_0,\xi_0)=\mathrm{Id}_n \\
\partial_t G^+_t(x_0,\xi_0)=-a(x_t)G^+_t(x_0,\xi_0)\pp
\end{array}
\right.
\end{equation}
We shall see later that $G_t^+$ is a cocycle map, this means that it satisfy the relation $G_{s+t}^+(x,\xi)=G^+_t(\phi_s(x,\xi))G_s^+(x,\xi)$. In the scalar-like case where $a(x)$ is a diagonal matrix everywhere the matrix $G_t^+$ is simply described by the formula  
\begin{equation}\label{scalarG}
G^+_t(x_0,\xi_0)=\exp\left(-\int_0^t a(x_s) \dd s \right)\pp\end{equation}
As we will see, the fact that this formula is no longer true in the general setting is the main reason why new phenomena arise in comparison to the scalar case (see for example Proposition \ref{prop4}). Let us define for every $t>0$ the quantities 
\begin{equation}
C(t)\overset{\mathrm{def}}{=}\frac{-1}{t}\sup_{(x_0,\xi_0)\in S^*M} \ln \left( \|G_t^+(x_0;\xi_0)\|_2 \right) \;\text{ and }\; C_\infty=\lim_{t\to \infty }C(t)\vv
\end{equation}
we will see later that this limit does exist.
In the scalar case one also have the simpler formula
\begin{equation}\label{eq:formuleGscalaire}
C(t)=\frac{1}{t} \inf_{(x_0,\xi_0)\in S^*M} \int_0^t  a(x_s)\dd s\pp
\end{equation}
There is a similar but more complex formula in the general case. Denote by $y_t$ a vector of $\mathbf C^n$ of euclidean norm $1$ such that 
\begin{equation}
G^+_{t}(x_0,\xi_0)G^+_{t}(x_0,\xi_0)^* y_t= \|G^+_{t}(x_0,\xi_0)\|_2^2y_t \pp
\end{equation}
The vector $y_t$ obviously depends on $(x_0,\xi_0)$ even though it is not explicitly written. We then have for every $t>0$
\begin{equation}\label{eq:formuleCinftyagainagain}
C(t)=\frac{1}{t} \inf_{(x_0,\xi_0)\in S^*M} \int_0^t \langle a(x_s)y_s,y_s\rangle\dd s \pp
\end{equation}
This formula is a direct consequence of Proposition \ref{prop:formulenormeG} and does not depends on the choice of $y_s$. Since $a$ is Hermitian positive semi-definite it follows from \eqref{eq:formuleCinftyagainagain} that $C(t)\geq 0$ and $C_\infty\geq 0$. Recall also that $D(0)\leq 0$ and we can finally state the main result of this article :

\begin{theo}\label{bigtheoreme}
The best exponential decay rate is given by the formula

\begin{equation}
\alpha= 2\min\{-D_0; C_\infty\}\vv
\end{equation}
moreover we have the following properties.
\begin{description}
\item[(i)]$C_\infty\leq -D_\infty$
\item[(ii)]One can have $-D_0>0$ and $C_\infty=0$.
\item[(iii)]One can have $C_\infty>0$ and $D_0=0$, but only if $n>1$.
\end{description}
\end{theo}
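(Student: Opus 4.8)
The plan is to realise $\alpha$ as $-2\omega_0$, where $\omega_0$ is the exponential growth bound of $(e^{tA_a})_{t\ge0}$ acting on the closed $A_a$-invariant complement $\mathcal H_1$ of $\ker A_a=\{(c,0):c\in\mathbf C^n\}$. Indeed $2E(u,0)=\|u_1\|_{L^2}^2+\|\nabla u_0\|_{L^2}^2$ vanishes exactly on $\ker A_a$, so $\|\cdot\|_E:=E^{1/2}$ is a norm on $\mathcal H_1$ — equivalent there to the graph norm by Poincar\'e's inequality — and $E(u,T)/E(u,0)=\|e^{TA_a}w\|_E^2/\|w\|_E^2$ with $w$ the $\mathcal H_1$-component of $(u_0,u_1)$; hence $\alpha=-2\omega_0$. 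If $C_\infty=0$ the beam construction below already gives $\alpha\le 2C_\infty=0$ while $E$ non-increasing gives $\alpha\ge0$, so there is nothing more to prove; I therefore assume $C_\infty>0$. This forces $\bigcap_{x\in M}\ker a(x)=\{0\}$ — a nonzero $c$ in the intersection would satisfy $G_t^+(x_0,\xi_0)c=c$ for all $t$ and all $(x_0,\xi_0)$, giving $C(t)\equiv0$ — and since a level-two generalised eigenvector of $A_a$ at $0$ would require a nonzero constant vector in $\bigcap_x\ker a(x)$, the eigenvalue $0$ is semisimple and $\mathrm{sp}(A_a|_{\mathcal H_1})=\mathrm{sp}(A_a)\setminus\{0\}$.

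For the upper bound I argue twice. First, an eigenvector $(u_0,\lambda_j u_0)$ of $A_a$ with $\lambda_j\neq0$ yields a solution with $E(u,t)=e^{2\mathfrak{Re}(\lambda_j)t}E(u,0)$ and $E(u,0)>0$, so $\alpha\le-2\mathfrak{Re}(\lambda_j)$; letting $\mathfrak{Re}(\lambda_j)\to D_0$ gives $\alpha\le-2D_0$. Second, fix $t>0$ and pick, by compactness and continuity, a point $(x_0,\xi_0)\in S^*M$ and a unit vector $y$ with $|G^+_t(x_0,\xi_0)y|=\|G^+_t(x_0,\xi_0)\|_2=\sup_{S^*M}\|G^+_t\|_2=e^{-tC(t)}$; a Gaussian-beam family $u^h$ of approximate solutions on $[0,t]$ concentrated on the bicharacteristic issued from $(x_0,\xi_0)$ with initial polarisation $y$ obeys the transport equation $\dot B=-\tfrac12(\Delta\phi)\,B-a(x_s)B$ along the ray, whose scalar spreading factor is exactly compensated in the energy by the spreading of the beam, so $E(u^h,t)/E(u^h,0)\to|G^+_t(x_0,\xi_0)y|^2=e^{-2tC(t)}$; replacing $u^h$ by the exact solution with the same Cauchy data costs only $o(1)$, so letting first $h\to0$ then $t\to\infty$ gives $\alpha\le2C_\infty$. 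Together: $\alpha\le2\min\{-D_0;C_\infty\}$.

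The lower bound $\alpha\ge2\min\{-D_0;C_\infty\}$ is the heart of the proof, and I would obtain it from the Gearhart--Pr\"uss theorem applied to $A_a|_{\mathcal H_1}$ with $\omega:=\max\{D_0;-C_\infty\}+\varepsilon$ for small $\varepsilon\in(0,C_\infty)$, so that $|\omega|<C_\infty$ and $\mathrm{sp}(A_a|_{\mathcal H_1})\subset\{\mathfrak{Re}<\omega\}$. Boundedness of the resolvent on $\{\mathfrak{Re}(z)=\omega\}$ is elementary for $|\mathrm{Im}(z)|$ bounded (a compact subset of the resolvent set), so the real content is the high-frequency estimate $\sup\{\|(z-A_a)^{-1}\|:\mathfrak{Re}(z)=\omega,\ |\mathrm{Im}(z)|\ge R_0\}<\infty$. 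Reducing the system to $(-\Delta+z^2+2za)v=f$ and writing $z=\omega+i\tau$ with $|\tau|\to\infty$, $h=1/|\tau|$, one obtains the semiclassical operator $h^2(-\Delta)-1+2ih(\omega\,\mathrm{Id}_n+a(x))+O(h^2)$, whose real principal symbol $|\xi|_g^2-1$ generates the geodesic flow; a semiclassical defect-measure argument then shows that the $\mathscr M_n(\mathbf C)$-valued defect measure of a putative sequence of normalised near-solutions is flow-invariant, with its polarisation carried along the bicharacteristic from $(x_0,\xi_0)$ by $M\mapsto e^{-2\omega s}\,G^+_s(x_0,\xi_0)\,M\,G^+_s(x_0,\xi_0)^{*}$. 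Since $\trace\big(e^{-2\omega s}G^+_s M G^{+*}_s\big)\le e^{2s(|\omega|-C(s))}\trace M$ and $C(s)\to C_\infty>|\omega|$, invariance forces the measure to vanish, contradicting the normalisation; hence the estimate holds, Gearhart--Pr\"uss gives $\|e^{tA_a|_{\mathcal H_1}}\|\lesssim e^{\omega t}$, and $\varepsilon\to0$ yields $\alpha\ge2\min\{-D_0;C_\infty\}$. The main obstacle is exactly this microlocal step: constructing the $\mathscr M_n(\mathbf C)$-valued defect measure and propagating it over long time intervals together with the $G^+_t$-cocycle conjugation — the vectorial substitute for Lebeau's scalar observability estimate, and the place where the cocycle structure of $G^+_t$, rather than the exponential formula \eqref{scalarG}, genuinely enters.

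It remains to prove (i)--(iii). For (i): running the same defect-measure analysis on exact high-frequency eigenfunctions, if $D_\infty>-C_\infty$ one finds eigenfunctions $w_k$ of $A_a$ with $\mathfrak{Re}(\lambda_{j_k})\to D_\infty$, $|\lambda_{j_k}|\to\infty$, whose nonzero flow-invariant defect measure $\mu$ on the energy shell satisfies $\int\trace\mu\le e^{2s(-D_\infty-C(s))}\int\trace\mu\to0$ (using $D_\infty+C_\infty>0$), hence $\mu=0$, a contradiction; so $D_\infty\le-C_\infty$, i.e. $C_\infty\le-D_\infty$. For (ii): take $M$ to carry a single hyperbolic closed geodesic $\gamma$ (for instance a compact hyperbolic surface) and $a\ge0$ smooth, vanishing on a small tube around $\gamma$ and positive outside; then $C(t)\le\tfrac1t\int_0^t a(\gamma(s))\,\dd s=0$ so $C_\infty=0$, while unique continuation ($a\not\equiv0$) excludes nonzero imaginary eigenvalues — so the finitely many eigenvalues of bounded modulus all have negative real part — and the instability of $\gamma$ provides a high-frequency spectral gap ($D_\infty<0$), a known consequence of hyperbolic trapping with negative topological pressure, so $D_0<0$. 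For (iii): if $n=1$ and $C_\infty>0$ then every geodesic meets $\{a>0\}$ within a uniform time, i.e. \eqref{GCC} holds, so by Bardos--Lebeau--Rauch--Taylor (\cite{rata74},\cite{blr92}) the energy decays exponentially, whence $\alpha>0$ and $\alpha\le-2D_0$ force $D_0<0$, and $C_\infty>0$ with $D_0=0$ is impossible; for the possibility when $n=2$, take $M=\mathbf R/2\pi\mathbf Z$ and $a(x)$ the orthogonal projection onto the line $\mathbf C\cdot(-\sin x,\cos x)$, so that $u_0(x)=(\cos x,\sin x)$ satisfies $a(x)u_0(x)=0$ and $-\Delta u_0=u_0$, making $(u_0,iu_0)$ an eigenvector of $A_a$ with eigenvalue $i\neq0$ (hence $D_0=0$), while no curve $s\mapsto G^+_s(x_0,\xi_0)y$ can remain in the rotating kernel line of $a(x_s)$ for all $s$, so $\|G^+_t(x_0,\xi_0)\|_2<1$ for $t$ large and, by compactness, $C_\infty>0$.
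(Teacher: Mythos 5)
Your proposal is sound and reaches the same formula, but the lower bound travels a genuinely different road from the one taken here. The paper works entirely in the time domain: Proposition \ref{lemmeinegaliteahauteenergie} establishes, by contradiction and propagation of a matrix-valued microlocal defect measure via the cocycle $G^+$ (Corollary \ref{propagationmesure}), the observability-type inequality $E(u,T)\leq (1+\varepsilon)e^{-2TC(T)}E(u,0)+C(\varepsilon,T)\|u_0,u_1\|^2_{L^2\oplus H^{-1}}$; the compact remainder is then absorbed by restricting to the subspace $H_N$ orthogonal to the first generalized eigenspaces of $A_a^*$, whose embedding constant into $L^2\oplus H^{-1}$ tends to $0$ by the Gohberg--Krein completeness theorem, and the inequality is iterated on $H_N$ while the finite-dimensional complement $W_N$ is handled by $D_0$. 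You instead go to the resolvent: a semiclassical quasimode/defect-measure argument on the line $\mathfrak{Re}(z)=\omega$, with the same conjugation $M\mapsto G_s^+MG_s^{+*}$ doing the real work, followed by Gearhart--Pr\"uss on the invariant complement of $\ker A_a$. This buys you a cleaner functional-analytic conclusion (no completeness theorem, no iteration), at the price of two points you leave implicit: Gearhart--Pr\"uss requires the resolvent bound on the whole half-plane $\{\mathfrak{Re}(z)\geq\omega\}$, not just on one vertical line, so you must either run the semiclassical estimate uniformly for $\mathfrak{Re}(z)\in[\omega,\delta]$ (which works since the trace estimate only needs $|\mathfrak{Re}(z)|<C_\infty$) or invoke a Phragm\'en--Lindel\"of interpolation with the trivial bound on $\{\mathfrak{Re}(z)\geq\delta\}$ coming from the energy contraction; and the reduction from the first-order system to the stationary operator $-\Delta+z^2+2za$ and back should be written out. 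The upper bounds and item \textbf{(iii)} are essentially the paper's arguments (your rotating-projector example on $S^1$ is a real-analytic variant of the one used here), and your item \textbf{(i)} via defect measures of high-frequency eigenfunctions is the same mechanism as the paper's observation that $E_{\lambda_j}\subset H_N$ for $|\lambda_j|>N$. For item \textbf{(ii)} note that both you and the paper ultimately defer to an external spectral-gap result: the paper cites Lebeau's $n=1$ example outright, while your hyperbolic-geodesic example relies on the (true, but nontrivial) high-frequency gap for a single hyperbolic trapped orbit, which deserves a precise citation rather than the phrase ``known consequence of negative topological pressure''.
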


This result has already been proved by G. Lebeau (\cite{leb93}) for a $n=1$ on a riemannian manifold \textit{with boundary}. The novelty of this article thus comes from the fact that we are dealing with vectorial waves with a matrix damping term, this leads to the apparition of interesting new phenomena in comparison to the scalar setting (see for example section 4). The proof of Theorem \ref{bigtheoreme} stays close from the one of Lebeau and so it is pretty likely that it would extend to the case where $\partial M \neq \emptyset$ if one would be willing to adapt Corollary \ref{propagationmesure}. Let us also point out a similar result about the asymptotic behavior of the observability constant of the wave equation in Theorem 2 and Corollary 4 of \cite{hpt16}.

\begin{remarque}
We will show in the proof of Theorem \ref{stabforte} that the geometric control condition
is in fact equivalent to $C_\infty >0$. Combining this with point \textbf{(iii)} of Theorem \ref{bigtheoreme} we already see that \eqref{GCC} is not equivalent to strong stabilization when $n>1$. Moreover, using point \textbf{(i)} of theorem \eqref{bigtheoreme}, we see that when $C_\infty>0$ and $D_0=0$ we have \eqref{GCC} but weak stabilization still fails.
\end{remarque}

\begin{remarque}
Proposition \ref{lemmeinegaliteahauteenergie} and Proposition \ref{gaussianbeamenergylocalisation} show that $C_\infty$ is taking account of the energy decay of the high frequency solutions of \eqref{dampedwaveequation}. On the other hand we have $D_0\geq D_\infty$ and $-C_\infty \geq D_\infty$, so if $-D_0<C_\infty$ there exists an eigenfunction $u$ of $A_a$ such that $E(u,t)=e^{-2D_0 t}E(u,0)=e^{-\alpha t}E(u,0)$. This means that $D_0$ is taking account of the energy decay of low frequency solutions of \eqref{dampedwaveequation}.
\end{remarque}

\paragraph*{High frequency overdamping} A natural question to ask oneself is how does $\alpha$ behaves in function of the damping term $a$. Let us respectively write $\alpha(a)$, $D_0(a)$ and $C_\infty(a)$ for the quantities $\alpha$, $D_0$ and $C_\infty$ associated with a damping term $a$. An interesting fact is that the function $a\mapsto \alpha(a)$ is not monotonous, even in the simplest case. Indeed in \cite{cozu93} S. Cox and E. Zuazua showed that\footnote{Provided that $a$ is of bounded variation.}, in the case of a scalar damped wave equation on a string of length one, the decay rate is given by $\alpha(a)=-2D_0(a)$. They also calculated the spectral abscissa $D_0(a)$ in the case of a constant damping term and found $D_0(a)=-a+\mathfrak{Re}(\sqrt{a^2-\pi^2})$. This shows that increasing the constant damping term above $\pi$ actually reduces $\alpha(a)$, such a phenomenon is called ``overdamping''.

Theorem 2 of \cite{leb93} shows that for a scalar damped wave equation on a general manifold the decay rate $\alpha(a)$ is governed by $D_0(a)$ \textit{and} $C_\infty(a)$. However in that case the overdamping can only come from $D_0$ since $a\mapsto C_\infty(a)$ is obviously monotonous, sub-additive and positively homogeneous from \eqref{eq:formuleGscalaire}. In view of the previous remark it makes sens to call this phenomenon ``low frequency overdamping''.

On the other hand with the \textit{vectorial} damped wave equation the situation is different. We will show that $a\mapsto C_\infty(a)$ is neither monotonous nor sub-additive or homogeneous and thus an overdamping phenomenon can also come from the $C_\infty$ term. Once again in view of the previous remark we call this phenomenon ``high frequency overdamping''. Bellow, Figure 1 illustrates the non linear behavior of $a\mapsto C_\infty(a)$ in a specific example. To be more precise we will prove the following result.
\begin{proposition}\label{prop4}
The function $a\mapsto C_\infty(a)$ is neither homogeneous nor monotonous, more precisely it is possible to have $C_\infty(2a)<C_\infty(a)$ or $2C_\infty(a)<C_\infty(2a)$. It is also not additive, $C_\infty(a+b)$ can be strictly greater or smaller than $C_\infty(a)+C_\infty(b)$. 
\end{proposition}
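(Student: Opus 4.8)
The plan is to reduce everything to a concrete finite–dimensional computation by working on a manifold whose geodesic flow is as simple as possible, and then to produce explicit matrix-valued dampings exhibiting each pathology. The non-linearity of $a\mapsto C_\infty(a)$ must come entirely from the failure of the naive formula \eqref{scalarG} for non-commuting Hermitian matrices: when \eqref{scalarG} holds, the identity \eqref{eq:formuleGscalaire} makes homogeneity, monotonicity and sub-additivity obvious, so the examples have to be genuinely matricial.

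First I would specialise to $M=\mathbf{R}/\mathbf{Z}$ with its flat metric, where the only maximal geodesics are the two unit-speed loops and every point of $S^*M$ is periodic with period $L=1$. Using the cocycle relation $G^+_{s+t}=G^+_t(\phi_s)\,G^+_s$ together with Gelfand's formula $\|N^k\|_2^{1/k}\to\rho(N)$, one gets for the monodromy $M_a=G^+_1(x_0,\xi_0)$ that $\sup_{(x_0,\xi_0)}\|G^+_k(x_0,\xi_0)\|_2^{1/k}\to\rho(M_a)$: indeed $\|G^+_k\|_2=\|M_a^k\|_2$, the conjugacy class of $M_a$ (hence $\rho(M_a)$) does not depend on the base point, and the two orientations give the same $\rho$ since the reversed monodromy is conjugate to the same Hermitian factors in reverse order and $\rho(X_k\cdots X_1)=\rho(X_1\cdots X_k)$ for Hermitian $X_i$. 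As $C_\infty$ is a true limit, evaluating it along $t=k\in\mathbf N$ gives the closed formula $C_\infty(a)=-\ln\rho(M_a)$. For a damping constant equal to $B_i\ge 0$ on an arc of length $\ell_i$ — and, to keep $a$ smooth, for a small mollification of such a step function, using continuity of $M_a$ in $a$ — this reads $C_\infty(a)=-\ln\rho\!\left(e^{-\ell_k B_k}\cdots e^{-\ell_1 B_1}\right)$, which in the commuting case is $e^{-\sum_i\ell_i\lambda_{\min}(B_i)}$, recovering \eqref{eq:formuleGscalaire}; the whole point is to break this with non-commuting $B_i$.

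With this dictionary the four assertions become elementary statements about spectral radii of products of matrix exponentials, proved by explicit $2\times2$ Hermitian positive semi-definite examples. For ``$C_\infty(a+b)$ can be strictly larger than $C_\infty(a)+C_\infty(b)$'' the cleanest choice is to take $a$ supported on one arc and equal there to a rank-one projector $P_1$, and $b$ supported on a disjoint arc and equal there to a rank-one projector $P_2$ onto a different line: then $\bigcap_s\ker a(x_s)=\ker P_1\ne\{0\}$ and likewise for $b$, so \eqref{GCC} fails for each, hence (by the equivalence GCC $\Leftrightarrow C_\infty>0$ announced before the proof of Theorem \ref{stabforte}) $C_\infty(a)=C_\infty(b)=0$, while $\bigcap_s\ker (a+b)(x_s)=\ker P_1\cap\ker P_2=\{0\}$ gives $C_\infty(a+b)>0$. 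For the remaining three — $C_\infty(2a)<C_\infty(a)$, $2C_\infty(a)<C_\infty(2a)$ and $C_\infty(a+b)<C_\infty(a)+C_\infty(b)$ — I would use a two-arc damping with tuned values $B_1=\beta\,R_\theta P R_{-\theta}$, $B_2=\gamma P$ ($P$ a projector, $R_\theta$ a rotation, so that the least-damped direction rotates along the loop), compute the $2\times2$ monodromy $M_{\tau a}=e^{-\tau\gamma P}\,R_\theta e^{-\tau\beta P}R_{-\theta}$ explicitly through its trace and determinant, and choose $\beta,\gamma,\theta$ (and, for the additivity clause, two superposed dampings of this type whose least-damped directions conflict) so that $\tau\mapsto\rho(M_{\tau a})$ is not log-linear in the prescribed direction. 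The room to do so is precisely that $M_{2a}=e^{-2\gamma P}R_\theta e^{-2\beta P}R_{-\theta}$ has the doubled integrand and is \emph{not} the square $M_a^2$; Figure 1 is meant to display such a non-monotone $\tau\mapsto C_\infty(\tau a)$.

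The hard part will be pinning these matrices down so the inequalities hold with the right strictness. For non-commuting $B_i$ the eigenvalues of $e^{-\tau B_2}e^{-\tau B_1}$ move in the complex plane as $\tau$ grows — they are complex conjugate on some $\tau$-ranges, where $\rho=\sqrt{\det}=e^{-\tau(\ldots)/2}$, and real elsewhere, with $\rho\to\cos^2\theta$ as $\tau\to\infty$ — and one must isolate a regime where the spectral radius genuinely increases with $\tau$ (the ``high frequency overdamping'') while keeping $a$ positive semi-definite and \eqref{GCC} satisfied; a first-order expansion in $\theta$ is log-linear, so the effect is of higher order and the example has to be chosen with care. Once one such triple $(\beta,\gamma,\theta)$ is found, rescalings and the superposition above yield all four inequalities, and the circle computation can be repeated on any manifold carrying a periodic geodesic along which the damping dominates — for instance the round sphere — showing the phenomenon is not an artefact of $\dim M=1$.
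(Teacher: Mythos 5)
Your reduction on the circle is exactly the paper's: the cocycle relation plus Gelfand's formula gives $C_\infty(a)=-\tfrac1L\ln\rho(M_a)$ for the monodromy (this is \eqref{Cinftytoymodel}), and passing to mollified piecewise-constant dampings turns everything into spectral radii of products of matrix exponentials. Your treatment of the strict super-additivity direction is complete and in fact cleaner than the paper's: with $a=\psi_1 P_1$ and $b=\psi_2 P_2$ on disjoint arcs, \eqref{GCC} fails for $a$ and for $b$ separately but holds for $a+b$, and the equivalence \eqref{GCC} $\Leftrightarrow C_\infty>0$ established with Theorem \ref{stabforte} gives $C_\infty(a)=C_\infty(b)=0<C_\infty(a+b)$; the paper instead exhibits numerical $2\times2$ Hermitian blocks for both additivity directions.

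The gap is that for the other three inequalities you never produce an example: you set up the two-arc family $M_{\tau a}=e^{-\tau\gamma P}R_\theta e^{-\tau\beta P}R_{-\theta}$ and assert that $(\beta,\gamma,\theta)$ can be tuned, but exhibiting such examples \emph{is} the content of the proposition, and part of your plan provably cannot work. For \emph{any} damping supported on two arcs the monodromy of $2a$ is $A^2B^2$ with $A,B$ Hermitian positive definite, and since $A^2B^2$ is similar to $BA^2B=(AB)^*(AB)$ one has $\rho(A^2B^2)=\|AB\|_2^2\ge\rho(AB)^2$, i.e. $C_\infty(2a)\le 2C_\infty(a)$: super-homogeneity $2C_\infty(a)<C_\infty(2a)$ is unreachable with two arcs, which is why the paper uses three blocks $A_1A_2A_3$. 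Moreover, within your rank-one choice the eigenvalues of $M_{\tau a}$ are real, the roots of $\lambda^2-T\lambda+uv$ with $u=e^{-\tau\beta}$, $v=e^{-\tau\gamma}$, $T=u+v+\cos^2\theta\,(1-u)(1-v)$, and the Rayleigh-quotient bound $\rho(M_{\tau a})=\lambda_{\max}\bigl(e^{-\tau\beta P/2}\,R_\theta e^{-\tau\gamma P}R_{-\theta}\,e^{-\tau\beta P/2}\bigr)\ge\cos^2\theta+v\sin^2\theta$ (test vector in $\ker P$), together with the analogous bound in $u$, shows the larger root is nondecreasing in $u$ and $v$; hence $\tau\mapsto\rho(M_{\tau a})$ is non-increasing and $C_\infty(2a)\ge C_\infty(a)$, so no high-frequency overdamping in this family either. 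So the ``hard part'' you defer is not a routine tuning of parameters: you need genuinely richer examples (at least three non-commuting blocks, as in the paper's explicit numerically found matrices, which also settle $C_\infty(a+b)<C_\infty(a)+C_\infty(b)$) before three of the four claims are actually proved.
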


\begin{figure}[ht] 
	\centering
		\includegraphics[width=1\textwidth]{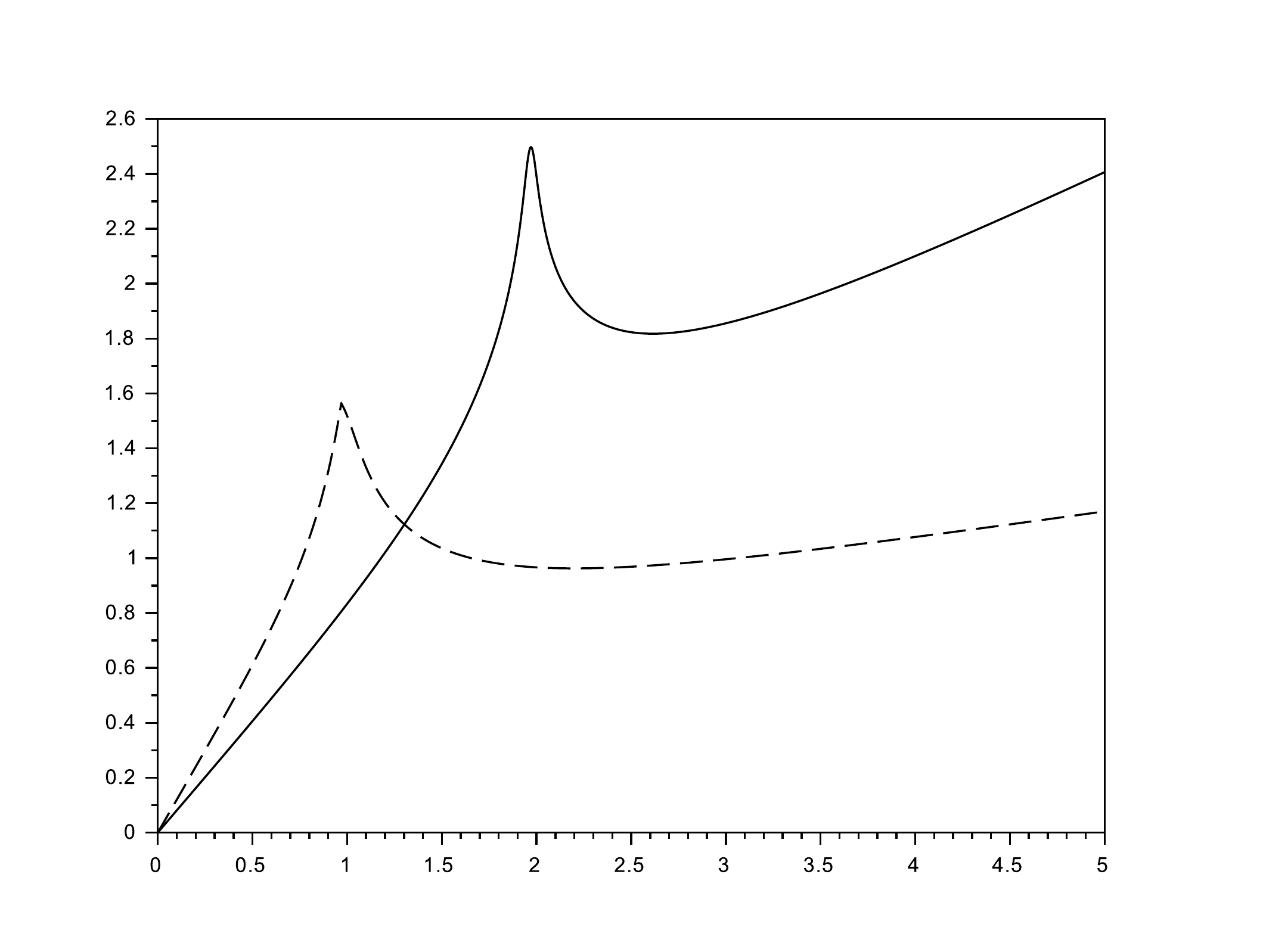}
	\caption{Plot of the function $\lambda\mapsto C_\infty(\lambda a)$ for two different damping term $a$ on $S^1$. }
	\label{fig:intro1}
\end{figure}

However it seems that $C_\infty$ still has some kind of linear behavior. Namely on $M=S^1$ and with a particular kind of damping term (see Section 4) we are able to show that
\[
\lim_{\lambda \to \infty} \frac{C_\infty(\lambda a)}{\lambda}  \;\text{ and } \; \lim_{\lambda \to 0^+} \frac{C_\infty(\lambda a)}{\lambda} 
\] both exist and are finite. This result is proved in section \ref{section4} but
it remains open to know if this is still true for any damping term on a general manifold $M$.

\paragraph*{} The remainder of this article is organized as follow.
Section \ref{section2} contains definitions and results about the propagation of the microlocal defect measures associated with a sequence of solutions of \eqref{dampedwaveequation}. These results will play an important role while bounding $\alpha$ from below. The section 3 is devoted to the proof of Theorem \ref{stabforte} and Theorem \ref{bigtheoreme}. Establishing the formula for $\alpha$ is the most difficult part, the lower bound proof makes use of Gaussian beams while for the upper bound we will use the result of section \ref{section2} conjointly with a decomposition in high and low frequencies. Eventually in the last section we study the behavior of $C_\infty$ and prove Proposition \ref{prop4}.

\section{Propagation of the microlocal deffect measure}\label{section2}

Let us work with the manifold $\mathbf R \times M$ endowed with the product metric induced by the ones of $\mathbf R$ and $M$.  We will denote by $(t,\tau,x,\xi)$ the points of $T^*(\mathbf R \times M)$, where $(t,\tau)\in T^*\bf R$ and $(x,\xi)\in T^*M$. Given a point $(x,\xi)\in T^*M$ we will write $|\xi|_g^2=g^x(\xi,\xi)$ the square of the norm of $\xi$. We moreover define $S^*(\mathbf R \times M)$ as the subset of points of $T^*(\mathbf R \times M)$ such that $\tau^2+|\xi|_g^2=1/2$ and recall that $S^*M=\{(x,\xi)\in T^*M : |\xi|^2_g=1/4\}$. We call $\phi$ the geodesic flow on $T^* M$, that is, the Hamiltonian flow generated by $|\xi|_g^2$ and $\varphi$ the Hamiltonian flow on $T^*(\mathbf R\times M)$ generated by $|\xi|_g^2-\tau^2$. In other words
\[
\varphi_s(t,\tau,x,\xi)=(t-2s\tau,\tau, \phi_s(x,\xi))\pp
\]
In everything that follows $(x_0,\xi_0)$ will denote a point of $S^*M$ and we will write $(x_t,\xi_t)=\phi_t(x_0,\xi_0)$.

\paragraph*{}Throughout this section we call $P$ the differential operator $\dt^2-\Delta $, we know that $P$ is self-adjoint on $L^2(\mathbf R \times M)^n$ and has $(|\xi|_g^2-\tau^2)\mathrm{Id}_n=p\cdot\mathrm{Id}_n$ for principal symbol, note that $p$ is a scalar valued function. If $b$ is a smooth function from $T^*(\mathbf R \times M)$ to $\mathscr M_n(\mathbf C)$ we note $\{p,b\}$ the Poisson's bracket of $p$ and $b$, it is defined as the matrix whose coefficients are the usual Poisson's bracket $\{p,b_{ij}\}$. With this definition the basic properties of Poisson's bracket are still true. Namely, we have a Leibniz's rule $\{p,bc\}=\{p,b\}c+b\{p,c\}$ and it is linked to the Hamiltonian flow of $p$ in the usual way, that is $\partial_s( b\circ \varphi_s) (\rho) = \{p,b\}(\varphi_s(\rho))$. Moreover, if $B$ is a pseudo-differential operator of order $m$ and of principal symbol $\sigma_m(B)=b$ then $[P,B]$ is a pseudo-differential operator of order $\leq m+1$ and of principal symbol $-i\{p,b\}$. Note that this is only possible because $p\cdot\mathrm{Id}$ commutes with every matrix of $\mathscr M_n(\mathbf C)$. For more details about pseudo-differential operators see \cite{hor85}.

\paragraph*{}We now recall some results about microlocal defect measures. For proofs and more details see the original article of P. Gérard \cite{ger91}.

\begin{proposition}
Let $(u_n)_n$  be a sequence of functions of $H^m_{\loc}(\mathbf R\times M)$  weakly converging to $0$. Then there exists 
\begin{description}
\item[-]a sub-sequence  $(u_{n_k})_k$,
\item[-]a positive Radon measure $\nu$ on $S^*(\mathbf R \times M)$,
\item[-]a matrix $M$ of $\nu$-integrable functions on $S^*(\mathbf R \times M)$ such that $M$ is Hermitian positive semi-definite  $\nu$-a.e. and $\trace(M)=1$ $\nu$-a.e.,
\end{description}
such that, for every compactly supported pseudo-differential operator $B$ with principal symbol $b$ of order $2m$ we have
\begin{equation}
\label{mdm}
\lim_{k\rightarrow +\infty} \left\langle B u_{n_k}| u_{n_k} \right\rangle_{H^{-m},H^m}=\int_{S^*(\mathbf R \times M)}\trace(bM)\dd\nu\text{.}
\end{equation}

\end{proposition}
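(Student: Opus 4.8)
The statement to prove is the existence theorem for matrix-valued microlocal defect measures: given a sequence $(u_n)_n$ in $H^m_{\loc}(\mathbf R \times M)$ converging weakly to $0$, there is a subsequence, a positive Radon measure $\nu$ on $S^*(\mathbf R \times M)$, and a Hermitian positive semi-definite trace-one matrix $M$ of $\nu$-integrable functions realizing the limit \eqref{mdm}. This is the vectorial analogue of P.\ Gérard's original scalar result, so the plan is to reduce to the scalar case component by component and then assemble the pieces into a matrix measure.

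Since the problem of convergence of $\langle Bu_{n_k} \mid u_{n_k}\rangle$ is insensitive to adding a smoothing operator, I would first reduce to $m=0$ by composing with an elliptic pseudo-differential operator of order $-m$ (e.g.\ $(1-\Delta)^{-m/2}$ acting diagonally on $\mathbf C^n$), so that it suffices to treat $L^2_{\loc}$ sequences and order-$0$ symbols. Next, writing $u_n = (u_n^1, \dots, u_n^n)$, each scalar component $u_n^i$ converges weakly to $0$ in $L^2_{\loc}$; more usefully, for any pair of indices $i,j$ and any scalar compactly supported $\Psi$DO $Q$ of order $0$, I would look at the sesquilinear quantities $\langle Q u_n^j \mid u_n^i \rangle$. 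Applying Gérard's scalar theorem (after a diagonal extraction over a countable dense family, or more cleanly to the ``big'' sequence of all components simultaneously) yields, along a common subsequence, a family of complex Radon measures $\mu_{ij}$ on $S^*(\mathbf R\times M)$ with $\langle Q u_{n_k}^j \mid u_{n_k}^i\rangle \to \int \sigma_0(Q)\, \dd\mu_{ij}$. Positivity of the operator $B = Q^*Q$ (applied to the vector by suitable choices mixing components) forces the matrix of measures $(\mu_{ij})$ to be Hermitian and positive semi-definite as a matrix-valued measure.

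Then I would set $\nu = \sum_i \mu_{ii} = \trace(\mu_{ij})$, which is a positive Radon measure dominating each $\mu_{ij}$ (by the Cauchy--Schwarz inequality for positive matrix measures, $|\mu_{ij}| \leq \tfrac12(\mu_{ii}+\mu_{jj}) \leq \nu$), so by Radon--Nikodym each $\mu_{ij}$ has a density $M_{ij} \in L^1(\nu)$. The matrix $M = (M_{ij})$ is then Hermitian positive semi-definite $\nu$-a.e.\ (pointwise positivity of the density follows from testing the matrix measure against constant positive vectors on small sets), and $\trace(M) = 1$ $\nu$-a.e.\ by construction of $\nu$. Finally, for a general matrix-symbol operator $B$ with principal symbol $b = (b_{ij})$, one writes $\langle Bu_{n_k}\mid u_{n_k}\rangle = \sum_{i,j}\langle B_{ij} u_{n_k}^j \mid u_{n_k}^i\rangle$ where $B_{ij}$ has principal symbol $b_{ij}$, passes to the limit in each term to get $\sum_{i,j}\int b_{ij} M_{ji}\,\dd\nu = \int \trace(bM)\,\dd\nu$, using that lower-order terms contribute nothing in the limit since $u_{n_k}\rightharpoonup 0$ (for a compact $\Psi$DO of negative order, $\langle B' u_{n_k} \mid u_{n_k}\rangle \to 0$).

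The main obstacle is the bookkeeping that turns the scalar result into a coherent \emph{matrix} measure with the correct positivity and normalization: one must verify that the off-diagonal measures $\mu_{ij}$ are absolutely continuous with respect to the trace measure $\nu$ and that the resulting density $M$ is Hermitian positive semi-definite $\nu$-almost everywhere, not merely as a measure. The cleanest route is to avoid diagonalizing component-by-component and instead invoke a vector-valued version of Gérard's compactness argument directly: extract so that $\langle Bu_{n_k}\mid u_{n_k}\rangle$ converges for $B$ ranging over a countable dense subset of compactly supported order-$0$ $\Psi$DOs, identify the limit as a continuous positive functional on symbols, and apply a Riesz-type representation for operator-valued measures (as in Gérard--Leichtnam or Burq--Lebeau). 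Everything else — the reduction to $m=0$, the vanishing of lower-order terms, the passage from scalar to matrix symbols — is routine.
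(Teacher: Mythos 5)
The paper does not prove this proposition at all: it is recalled verbatim from P.~G\'erard's article \cite{ger91} (whose Theorem is in fact already stated for vector-valued, indeed Hilbert-space-valued, sequences, with an operator-valued density), so there is no in-paper argument to compare against line by line. Your blind proposal is a correct and standard way to reconstruct the result: reduction to $m=0$ by an elliptic operator of order $-m$, construction of the matrix of limit measures, $\nu=\trace$ of that matrix, Radon--Nikodym densities $M_{ij}\in L^1(\nu)$, a.e.\ Hermitian positive semi-definiteness and $\trace(M)=1$, and the limit formula \eqref{mdm} by linearity, with lower-order terms killed by Rellich since $u_{n_k}\rightharpoonup 0$. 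Two points you gloss over but should make explicit if you write this up: first, G\'erard's scalar theorem only applies to quadratic expressions $\langle Qv_n\mid v_n\rangle$ for a single sequence, so the off-diagonal limits $\langle Qu_n^j\mid u_n^i\rangle$ are not directly covered and must be obtained by polarization (apply the scalar result to $u_n^i\pm u_n^j$ and $u_n^i\pm i u_n^j$ along a common subsequence, extracted diagonally over a countable family of operators $Q$); second, positivity of the matrix measure tested against \emph{constant} vectors $z$ only gives, for each fixed $z$, that $\langle M(\cdot)z,z\rangle\geq 0$ $\nu$-a.e., and you need a countable dense set of $z$ plus continuity in $z$ to upgrade this to $M$ being positive semi-definite $\nu$-almost everywhere. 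Both are routine, so the proposal is sound; alternatively, one can simply invoke the vector-valued statement of \cite{ger91} directly, which is exactly what the paper does.
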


Note that here $b$ is a matrix of dimension $n$ depending on $(t,\tau,x,\xi)$. One crucial property is that $(u_{n_k})_k$ strongly converges to $0$ if and only if $\mu=0$. 

\begin{definition}
In the setting of the previous theorem we will call $\mu=M\nu$ the microlocal defect measure of the sub-sequence $(u_{n_k})_k$ and we will say that $(u_n)_n$ is ``pure'' if it has a microlocal defect measure without preliminary extraction of a sub-sequence.
\end{definition}

\begin{proposition}\label{supportP}
Let $I\subset \mathbf R$ be a compact interval and $(u_n)$ be a pure sequence of $H^1(I\times M )$ weakly converging to $0$ with $M\dd\nu$ as microlocal defect measure. Recall that $P=\dt^2-\Delta$ and that its principal symbol is $p\cdot\mathrm{Id}$, the following properties are equivalent :
\begin{description}
\item[(i)] $Pu_n \underset{n \rightarrow \infty}{\rightarrow} 0$ strongly in $H^{-1}(I\times M)$.
\item[(ii)] $\nu$ is supported on the set $\{p=0\}$.
\end{description} 
\end{proposition}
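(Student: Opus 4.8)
The plan is to deduce both implications from the fundamental formula \eqref{mdm}, applied to pseudo-differential operators manufactured out of $P$. Two structural facts make this straightforward: the principal symbol $\sigma_2(P)=p\,\mathrm{Id}_n$ is a scalar multiple of the identity (so it commutes with every symbol and the matrix symbolic calculus behaves exactly as in the scalar case, as already noted above), and $\trace(M)=1$ $\nu$-almost everywhere, which is what collapses the matrix-valued identities below to the scalar statement ``$p=0$ on $\supp\nu$''.

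For \textbf{(i)}$\Rightarrow$\textbf{(ii)}, I would take an arbitrary compactly supported pseudo-differential operator $B$ of order $0$ with principal symbol $b$. Since $(u_n)$ is bounded in $H^1(I\times M)$ while $\|Pu_n\|_{H^{-1}}\to 0$,
\[
\langle B^{*}Pu_n,u_n\rangle_{H^{-1},H^1}=\langle Pu_n,Bu_n\rangle_{H^{-1},H^1}\longrightarrow 0 .
\]
Now $B^{*}P$ is a compactly supported pseudo-differential operator of order $2$ with principal symbol $p\,b^{*}$ (the order of multiplication being irrelevant since $p$ is scalar), so \eqref{mdm} turns the above into $\int_{S^*(\mathbf R\times M)}p\,\trace(b^{*}M)\,\dd\nu=0$. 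Choosing $b=\chi\,\mathrm{Id}_n$ with $\chi$ an arbitrary real scalar symbol of order $0$ and using $\trace(M)=1$, this reads $\int p\,\chi\,\dd\nu=0$ for all such $\chi$, i.e.\ the Radon measure $p\,\nu$ vanishes; since $\nu\geq 0$ this is exactly the statement that $\nu$ is carried by $\{p=0\}$.

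For \textbf{(ii)}$\Rightarrow$\textbf{(i)}, I would realize the relevant $H^{-1}$ norm as a quadratic form in $u_n$. Fix $\psi\in C^\infty_c(\mathbf R)$ equal to $1$ on a neighbourhood of $I$ (harmlessly enlarging $I$, so that the $u_n$ are still defined on $\supp\psi\times M$), and let $R=(1-\Delta_{\mathbf R\times M})^{-1}$, a self-adjoint elliptic pseudo-differential operator of order $-2$ with $\|v\|^2_{H^{-1}}=\langle Rv,v\rangle$ whose principal symbol equals $2$ on $S^*(\mathbf R\times M)$ (since $\tau^2+|\xi|_g^2=1/2$ there). Then $B:=P\psi R\psi P$ is a compactly supported pseudo-differential operator of order $2$, its principal symbol restricts to $2\psi^2p^2\,\mathrm{Id}_n$ on $S^*(\mathbf R\times M)$, and one checks by self-adjointness that $\langle Bu_n,u_n\rangle_{H^{-1},H^1}=\|\psi Pu_n\|^2_{H^{-1}}$. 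Applying \eqref{mdm} together with $\trace(M)=1$ gives
\[
\|\psi Pu_n\|^2_{H^{-1}}\longrightarrow 2\int_{S^*(\mathbf R\times M)}\psi^2p^2\,\dd\nu ,
\]
and the right-hand side vanishes precisely because $\nu$ is supported on $\{p=0\}$. Hence $\psi Pu_n\to 0$ in $H^{-1}(\mathbf R\times M)$, and since $\psi\equiv 1$ on $I$ this yields $Pu_n\to 0$ strongly in $H^{-1}(I\times M)$.

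I do not expect a genuine obstacle here: once \eqref{mdm} is in hand the argument is essentially bookkeeping. The points that need a little care are the insertion of the cut-off $\psi$ so that $B$ is genuinely compactly supported (and the attendant harmless enlargement of $I$), the symbolic computations in the matrix-valued setting, which work only because $\sigma_2(P)$ is a scalar multiple of the identity, and the repeated appeal to $\trace(M)=1$. One could instead phrase \textbf{(ii)}$\Rightarrow$\textbf{(i)} by identifying the microlocal defect measure of $(Pu_n)$ in $H^{-1}$ with $p^2\mu$ and invoking that it vanishes under \textbf{(ii)}; this is conceptually cleaner but needs the $H^{-1}$ version of the measure, so I would keep the direct computation above.
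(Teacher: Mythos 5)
The paper does not actually prove this proposition: it is quoted as one of the ``results about microlocal defect measures'' for which the reader is referred to G\'erard \cite{ger91}. Your argument is the standard proof of the localization property and it is correct. Both directions are sound: for \textbf{(i)}$\Rightarrow$\textbf{(ii)} the computation $\lim\langle B^*Pu_n,u_n\rangle=\int p\,\trace(b^*M)\,\dd\nu$ is right (the principal symbol of $P$ being scalar makes the order of the factors irrelevant), and with $b=\chi\,\mathrm{Id}_n$ and $\trace(M)=1$ you get $\int p\chi\,\dd\nu=0$ for all real $0$-homogeneous $\chi$; since $p$ changes sign on $S^*(\mathbf R\times M)$ you should spell out that testing with nonnegative $\chi$ supported in $\{p>\varepsilon\}$ and in $\{p<-\varepsilon\}$ separately gives $\nu(\{|p|>\varepsilon\})=0$, which is the carried-by-$\{p=0\}$ conclusion. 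For \textbf{(ii)}$\Rightarrow$\textbf{(i)} the operator $P\psi R\psi P$ is a legitimate compactly supported operator of order $2$, the identity $\langle Bu_n,u_n\rangle=\|\psi Pu_n\|^2_{H^{-1}}$ holds for the norm induced by $R$, and the symbol computation on $S^*(\mathbf R\times M)$ is correct. The only point deserving more than a parenthesis is your ``harmless enlargement of $I$'': for $u_n$ genuinely defined only on $I\times M$, the defect measure is built from interior-supported test operators and cannot detect concentration at $\partial I\times M$, so \textbf{(ii)}$\Rightarrow$\textbf{(i)} should be read in $H^{-1}_{\mathrm{loc}}$ of the interior (or under the standing assumption, true in the paper's application, that the $u_n$ are restrictions of solutions defined on all of $\mathbf R\times M$). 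With that caveat made explicit your proof is complete.
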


\begin{proposition}
Let $(u_k)_k$ be a bounded sequence of $H^1( I \times M)$ weakly converging to $0$. Assume that $u_k$ is solution of the damped wave equation for every $k$ and let $b$ be a smooth function on $S^*( I \times M)$ to $\mathscr M_n(\mathbf C)$, $1$-homogeneous in the $(\tau,\xi)$ variable. If $(u_k)$ is pure with microlocal defect measure $\mu=M\nu$ then

\[
\int_{S^*( I \times M)} \trace \Big[(\{b,p\}-2\tau(ab+ba))M \Big] \dd \nu =0 \pp
\]
\end{proposition}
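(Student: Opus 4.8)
The plan is to read the identity as a propagation law for $\mu=M\nu$, obtained by testing the equation against a pseudo-differential operator and evaluating a commutator in two different ways. Concretely, I would fix the symbol $b$ and pick a compactly supported pseudo-differential operator $B$ of order $1$ with principal symbol $b$. To avoid contributions from the time boundary $\partial I\times M$ one may first replace $b$ by $\chi(t)b$ for a cutoff $\chi$ supported in the interior of $I$ (or simply work on a slightly larger interval, using that solutions of \eqref{dampedwaveequation} are restrictions of solutions on all of $\mathbf R\times M$), and let $I$ grow at the end. Both $\{p,b\}$ and $\tau(ab+ba)$ are $2$-homogeneous in $(\tau,\xi)$ since $p$ is $2$-homogeneous and $b$ is $1$-homogeneous, so they are legitimate principal symbols of order-$2$ operators and formula \eqref{mdm} applies with $m=1$.

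First, I would compute the limit of $\langle [P,B]u_k,u_k\rangle_{H^{-1},H^1}$ directly. By the symbol calculus recalled above, $[P,B]$ has order $\le 2$ and principal symbol $-i\{p,b\}$; writing $[P,B]=Q+R$ with $Q$ of order $2$ and $R$ of order $\le 1$, the term $\langle Ru_k,u_k\rangle$ vanishes in the limit because $Ru_k$ is bounded in $L^2_\loc$ and supported in a fixed compact set while $u_k\to 0$ strongly in $L^2_\loc$ by Rellich's theorem. Hence
\[
\langle [P,B]u_k,u_k\rangle \longrightarrow -i\int_{S^*(I\times M)}\trace\big(\{p,b\}M\big)\,\dd\nu\pp
\]

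Second, I would expand $[P,B]=PB-BP$ and use that $P$ is self-adjoint together with the equation in the form $Pu_k=-2a\partial_t u_k$, which gives
\[
\langle [P,B]u_k,u_k\rangle = \langle Bu_k,Pu_k\rangle-\langle BPu_k,u_k\rangle = -2\langle Bu_k,a\partial_t u_k\rangle + 2\langle Ba\partial_t u_k,u_k\rangle\pp
\]
The operator $B\circ a\circ\partial_t$ has order $\le 2$ with principal symbol $i\tau\,ba$ (product of the symbols of $B$, of multiplication by $a$, and of $\partial_t$, in that order); moving $a\partial_t$ onto the other factor by taking $L^2$-adjoints, with $a^*=a$ and $\partial_t^*=-\partial_t$, gives $\langle Bu_k,a\partial_t u_k\rangle=\langle -\partial_t(aBu_k),u_k\rangle$, and $-\partial_t\circ a\circ B$ has principal symbol $-i\tau\,ab$. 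Discarding the order $\le 1$ remainders as before, \eqref{mdm} yields
\[
\langle [P,B]u_k,u_k\rangle \longrightarrow 2i\int_{S^*(I\times M)}\trace\big(\tau(ab+ba)M\big)\,\dd\nu\pp
\]
Comparing the two limits and dividing by $-i$ gives $\int\trace(\{p,b\}M)\,\dd\nu=-2\int\trace(\tau(ab+ba)M)\,\dd\nu$, which is the announced identity once one writes $\{b,p\}=-\{p,b\}$ and changes the overall sign.

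The genuinely delicate part will be the bookkeeping: tracking the order of the non-commuting factors (the operator $B$, multiplication by the matrix $a$, and $\partial_t$) when computing the principal symbols of the various compositions, and keeping the sign conventions for $\sigma(\partial_t)$, for the commutator, and for the Poisson bracket consistent throughout. By contrast, the removal of the time-boundary terms via the cutoff and the vanishing of the subprincipal contributions via the compact embedding $H^1_\loc\hookrightarrow L^2_\loc$ are routine.
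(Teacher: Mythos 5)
Your proof is correct and takes essentially the same route as the paper: both evaluate $\lim_k\langle [P,B]u_k,u_k\rangle$ in two ways, once via the principal symbol $-i\{p,b\}$ of the commutator and once via the equation $Pu_k=-2a\partial_t u_k$ together with the self-adjointness of $P$, and then equate the limits. The only difference is that you make explicit the time cutoff, the adjoint bookkeeping and the vanishing of the lower-order remainders, which the paper's proof leaves implicit.
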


\begin{proof}
Let $B$ be a pseudo-differential operator of order $1$ and with principal symbol $b$, we then have 
\[
\lim_{k\to \infty} \left\langle[B,P]u_k,u_k\right\rangle_{H^{-1},H^1}=\int \trace[\sigma_2([B,P])M]\dd \nu=\frac{1}{i}\int\trace\left[\{b,p\}M\right]\dd \nu\vv 
\]
but we moreover know that $\langle [B,P]u_k,u_k\rangle=-2\langle(Ba\partial_t+a\partial_tB)u_k,u_k\rangle$, which tends to 
\[
-2i\int  \trace[\tau(ab+ba)M]\dd \nu\vv
\] 
thus finishing the proof.
\end{proof}
In what follows $\mu=M\dd \nu$ will denote the microlocal defect measure of a pure sequence $(u_k)_k$ of solutions of the damped wave equation on $\mathbf R\times M$. Here our aim is to give a relation between $\varphi_s^*\mu$ and $\mu$. The measure $\varphi_s^*\mu$ is the push forward of $\mu$ by $\varphi_s$, it is defined by the following property
\[
\text{for every }\mu\text{-integrable function } b \text{ we have } \;\int \trace[(b\circ \varphi_s) \dd\mu]=\int \trace[b\dd\varphi_s^*\mu]\pp
\]
\begin{definition}
For every $s\in \bb R$ we define the function $G_s:T^*(\mathbf R \times M) \to \mathscr{M}_n(\bb C) $ as the solution of the following differential equation. 
\[
\left\lbrace
\begin{array}{l}
G_0(t,\tau,x,\xi)=\mathrm{Id}_n \\
\partial_s G_s(t,\tau,x,\xi)=\{p,G_s\}(t,\tau,x,\xi)+2\tau G_s(t,\tau,x,\xi)a(x)\pp
\end{array}
\right.
\]

\end{definition}
The matrix $G_t$ is a cocycle map, that is, it satisfies the relation $G_{s+t}(\rho)=G_t(\varphi_s(\rho))G_s(\rho)$. The proof of this fact is given for $G^+_t$ at the end of the section.
\begin{proposition}
The propagation of the measure is given by the formula $\varphi_s^*\mu=G_{-s}\mu G_{-s}^*$, more precisely this means that for every continuous function $b$ compactly supported in the $(t,x)$ variable we have
\[
\int_{S^*(\mathbf R \times M)} \trace[(b\circ\varphi_s)G_sMG_s^*]\dd \nu =\int_{S^*(\mathbf R \times M)} \trace[bM]\dd \nu
\]
or equivalently for every continuous function $c$ compactly supported in the $(t,x)$ variable
\[
\int_{S^*(\mathbf R \times M)} \trace[c \,G_{-\sigma}MG_{-\sigma}^*]\dd \nu=\int_{S^*(\mathbf R \times M)} \trace[c\circ \varphi_\sigma M]\dd \nu\pp
\]
\end{proposition}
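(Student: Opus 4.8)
The plan is to prove the propagation formula $\varphi_s^*\mu = G_{-s}\mu G_{-s}^*$ by differentiating in $s$ and reducing to the already-proven transport identity for solutions of the damped wave equation. The idea is that both sides of the claimed identity, viewed as matrix-valued measures depending on the parameter $s$, agree at $s=0$ (where $G_0 = \mathrm{Id}_n$) and satisfy the same first-order ODE in $s$; uniqueness then gives the result.

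First I would fix a continuous matrix-valued function $b$ on $S^*(\mathbf R\times M)$, $1$-homogeneous in $(\tau,\xi)$ and compactly supported in the $(t,x)$ variables, and define
\[
F(s) = \int_{S^*(\mathbf R\times M)} \trace\big[(b\circ\varphi_s)\, G_s M G_s^*\big]\,\dd\nu\pp
\]
The goal is to show $F(s) = F(0) = \int \trace[bM]\,\dd\nu$ for all $s$, i.e. $F$ is constant. Differentiating under the integral sign, and using that $\partial_s(b\circ\varphi_s) = \{p,b\}\circ\varphi_s$ together with the defining ODE $\partial_s G_s = \{p,G_s\} + 2\tau G_s a$, the derivative $F'(s)$ becomes an integral of a trace involving $\{p,b\circ\varphi_s\}$, $\{p,G_s\}$, $2\tau G_s a$ and their adjoint counterparts, all against $M\dd\nu$. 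The key algebraic step is to recognize, using the Leibniz rule for $\{p,\cdot\}$ and cyclicity of the trace, that the integrand can be rewritten as $\trace\big[(\{\tilde b,p\} - 2\tau(a\tilde b + \tilde b a)) M\big]$ where $\tilde b = G_s^*(b\circ\varphi_s)G_s$ (one must check $\tilde b$ is again continuous, $1$-homogeneous in $(\tau,\xi)$, and compactly supported in $(t,x)$, which follows since $G_s$ depends only on $(t,\tau,x,\xi)$ and is bounded on the relevant compact sets, the flow $\varphi_s$ preserving the $(t,x)$-support up to a translation). By the third Proposition of the excerpt (the transport relation for solutions of the damped wave equation) applied to $\tilde b$, this integral vanishes, so $F'(s) \equiv 0$ and $F$ is constant. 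This establishes the first displayed identity; the second (equivalent) one follows by the change of variable $s = -\sigma$ and the cocycle relation $G_{-\sigma}(\varphi_\sigma(\rho)) G_\sigma(\rho) = G_0(\rho) = \mathrm{Id}_n$, so that writing $c = b\circ\varphi_\sigma$ and substituting converts one form into the other; alternatively one reads off the definition of the push-forward directly.

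The main obstacle I anticipate is the bookkeeping in the differentiation step: justifying differentiation under the integral (the measure $\nu$ has compact support in $(t,x)$ and $G_s$, $b\circ\varphi_s$ are smooth in $s$ with locally bounded $s$-derivatives, so dominated convergence applies), and then carrying out the matrix Poisson-bracket manipulation carefully, since $\{p,\cdot\}$ acts coefficientwise and one must track the noncommutativity of the matrices $G_s$, $a$, $b$ while using only cyclicity of the trace. In particular one needs the Leibniz identity $\{p, G_s^*(b\circ\varphi_s)G_s\} = \{p,G_s^*\}(b\circ\varphi_s)G_s + G_s^*\{p,b\circ\varphi_s\}G_s + G_s^*(b\circ\varphi_s)\{p,G_s\}$ and the relation $\{p, G_s^*\} = (\{p,G_s\})^* $ (valid since $p$ is real scalar-valued), combined with $\partial_s G_s^* = (\{p,G_s\})^* + 2\tau a G_s^*$, to cancel the flow-derivative terms and leave exactly the combination appearing in the transport Proposition. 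Once that identity is in hand, the vanishing of $F'(s)$ is immediate and the proof is complete.
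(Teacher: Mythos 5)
Your proposal is correct and follows essentially the same route as the paper: differentiate $s\mapsto\int\trace[(b\circ\varphi_s)G_sMG_s^*]\dd\nu$ under the integral, use the Leibniz rule together with $\partial_sG_s=\{p,G_s\}+2\tau G_sa$ and its adjoint to recognize the integrand as the transport expression evaluated at $\tilde b=G_s^*(b\circ\varphi_s)G_s$, and conclude it vanishes by the preceding proposition, with the second identity obtained by the substitution $\sigma=-s$, $c=b\circ\varphi_s$. (Only a harmless overall sign differs in your rewriting of the integrand, which does not affect the vanishing.)
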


\begin{proof}
In order to show the first equality it suffice to verify that
\begin{equation}\label{deriveeintegrale}
\partial_s \int \trace[(b\circ\varphi_s)G_sMG_s^*]\dd \nu =0\pp
\end{equation}
We know that we can differentiate under the integral sign,
\[
\partial_s\int \trace\big[(b\circ\varphi_s)G_sMG_s^*\big]\dd \nu =\int \trace\Big[\partial_s\big((b\circ\varphi_s)G_sMG_s^*\big)\Big]\dd \nu=\int \trace\Big[\partial_s\big(G_s^*(b\circ\varphi_s)G_s\big)M\Big]\dd \nu \pp
\] 
Denoting by a $'$ the differentiation with respect to $s$ we then get 
\[
\begin{array}{rcl}
\displaystyle\partial_s \big(G_s^*(b\circ\varphi_s)G_s\big)&= &\displaystyle{G_s^*}'(b\circ\varphi_s)G_s + G_s^* \{p,b\circ \varphi_s\}G_s + G_s^*(b\circ\varphi_s)G_s'\\
\, &=&\displaystyle \{p,G_s^*(b\circ\varphi_s)G_s\} - \{p,G_s^*\}(b\circ\varphi_s)G_s- G^*_s(b\circ\varphi_s)\{p,G_s\}\\
\, &\,&\displaystyle +{G_s^*}' (b\circ \varphi_s)G_s + G_s^*(b\circ\varphi_s)G_s'\vv
\end{array}
\]
and by application of the previous proposition 
\[
\int \trace[\{p,G_s^*(b\circ\varphi_s)G_s\}M]\dd \nu = -\int\trace[(2\tau aG_s^*(b\circ\varphi_s)G_s+2\tau G_s^*(b\circ\varphi_s)G_s a)M  ]\dd\nu\pp
\]
By gathering all these terms we see that in order to have \eqref{deriveeintegrale} it suffices that
\[
\partial_s G_s=\{p,G_s\}+2\tau G_s a \;\text{ and }\; \partial_s{G_s^*}=\{p,{G_s^*}\}+2\tau a {G_s^*}\vv
\]
which coincides with the definition of $G$ and proves the first formula. The last formula is obtained by simply writing $c=b\circ\varphi_s$ and $\sigma=-s$.
\end{proof}

\begin{proposition}
The measure $\nu$ is supported on the set $\{\tau=\pm 1/2\}$. 
\end{proposition}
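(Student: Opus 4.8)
The plan is to verify that the sequence $(u_k)_k$ satisfies hypothesis \textbf{(i)} of Proposition \ref{supportP}, and then to intersect the resulting information on $\supp\nu$ with the constraint that $\nu$ lives on the cosphere bundle $S^*(\mathbf R\times M)$. Since being supported on $\{\tau=\pm 1/2\}$ is a local statement, I would first restrict everything to $I\times M$ for an arbitrary compact interval $I\subset\mathbf R$: the restriction of $(u_k)$ is then a bounded sequence of $H^1(I\times M)$ weakly converging to $0$, with microlocal defect measure $M\dd\nu$, and it suffices to prove the claim in this localized setting.

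The key step is to show that $Pu_k\to 0$ strongly in $H^{-1}(I\times M)$. Since each $u_k$ solves the damped wave equation we have $Pu_k=(\dt^2-\Delta)u_k=-2a(x)\dt u_k$. The sequence $(\dt u_k)$ is bounded in $L^2(I\times M)^n$ and converges weakly to $0$ there, and multiplication by the smooth, bounded matrix-valued function $a$ is a bounded — hence weakly continuous — operator on $L^2(I\times M)^n$, so $(a(x)\dt u_k)$ is bounded in $L^2$ and converges weakly to $0$. Now $I\times M$ being a compact manifold (with boundary), the inclusion $L^2(I\times M)\hookrightarrow H^{-1}(I\times M)$ is compact by Rellich's theorem, so any bounded sequence converging weakly to $0$ in $L^2$ converges strongly to $0$ in $H^{-1}$; applied to $a(x)\dt u_k$ this yields $Pu_k\to 0$ strongly in $H^{-1}(I\times M)$.

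By Proposition \ref{supportP} this forces $\nu$ to be supported on $\{p=0\}=\{\,|\xi|_g^2=\tau^2\,\}$. On the other hand $\nu$ is, by construction, a measure on $S^*(\mathbf R\times M)=\{\,\tau^2+|\xi|_g^2=1/2\,\}$, so on $\supp\nu$ both relations hold simultaneously; eliminating $|\xi|_g^2$ gives $2\tau^2=1/2$, i.e. $\tau=\pm 1/2$ (and consequently $|\xi|_g^2=1/4$). Hence $\nu$ is supported on $\{\tau=\pm 1/2\}$, which is the desired conclusion. I do not expect any real obstacle here: the single point that deserves a line of justification is the passage from weak $L^2$ convergence to strong $H^{-1}$ convergence, which is exactly the Rellich compactness argument above, and everything else is a direct invocation of Proposition \ref{supportP} together with elementary manipulation of the two constraints cutting out $\supp\nu$.
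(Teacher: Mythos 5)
Your proof is correct and follows essentially the same route as the paper: both deduce from $Pu_k=-2a\partial_t u_k$ and Proposition \ref{supportP} that $\nu$ is supported on $\{p=0\}$, and then intersect with the constraint $\tau^2+|\xi|_g^2=1/2$ defining $S^*(\mathbf R\times M)$. You merely spell out the Rellich compactness step (weak $L^2$ convergence of $a\partial_t u_k$ implying strong $H^{-1}$ convergence) that the paper leaves implicit.
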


\begin{proof}
It a consequence of the proposition \ref{supportP} : $\nu$ is a measure on $S^*(\mathbf R \times M)$ so $\tau^2+|\xi|^2_g=1/2$ and it is supported on the set $\{p=0\}$ because $(\partial_t^2-\Delta) u_k=-2a\partial_t u_k$ strongly converges to $0$ in $H^{-1}$.
\end{proof}

\begin{definition}
This encourages us to consider the two connected components
\[
SZ^+=S^*(\mathbf R \times M)\cap\{\tau=-1/2\} \text{ and } SZ^-=S^*(\mathbf R \times M)\cap \{\tau=1/2\},
\]
as well as $\mu^+=M^+\nu^+$ and $\mu^-=M^-\nu^-$ the restrictions of $\mu$ to $SZ^+$ and $SZ^-$. Moreover we will respectively note $G_s^+$ and $G_s^-$ the restrictions of $G_s$ to $SZ^+$ and $SZ^-$.
\end{definition}

With this notation we get
\[
\partial_s G_s^+=\{p,G_s^+\}-G_s^+ a\pp
\]
\begin{remarque}
Since the function $a$ only depends on $x$ and since the $\tau$ variable is constant on $SZ^+$ and $SZ^-$, the functions $G_s^+$ and $G_s^-$ only depends on $(x,\xi)$ so we can also consider them as functions on $S^*M$.
\end{remarque}

\begin{corollaire}\label{propagationmesure}
Let $B$ be a Borel set of $SZ^+$ we have $\displaystyle \nu^+(\varphi_s(B))=\int_B \trace[G_{s}^+ M{G_{s}^+}^*]\dd \nu^+$.
\end{corollaire}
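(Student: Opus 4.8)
The plan is to obtain this as an immediate consequence of the propagation formula in the preceding proposition, once one restricts attention to the component $SZ^+$ and upgrades the class of admissible test functions from continuous ones to indicator functions of Borel sets. First I would observe that $\varphi_s$ leaves the value of $\tau$ unchanged, hence maps $SZ^+$ homeomorphically onto itself; in particular $\varphi_s(B)$ is again a Borel subset of $SZ^+$, and the splitting $\nu=\nu^++\nu^-$, $\mu=\mu^++\mu^-$ is compatible with the flow. Applying the previous proposition to a test function of the form $b=c\,\mathrm{Id}_n$ with $c$ continuous and compactly supported in the $(t,x)$ variables, and then restricting the resulting identity to $SZ^+$, gives
\[
\int_{SZ^+}(c\circ\varphi_s)\,\trace\bigl[G_s^+ M^+ {G_s^+}^*\bigr]\dd\nu^+=\int_{SZ^+}c\,\trace[M^+]\dd\nu^+=\int_{SZ^+}c\dd\nu^+,
\]
where I used that $\trace(M^+)=1$ $\nu^+$-almost everywhere and that the scalar $c$ can be pulled out of the trace.

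Next I would read both sides as integrals against Radon measures on $SZ^+$. Since $M^+$ is Hermitian positive semi-definite $\nu^+$-a.e., the density $\trace[G_s^+ M^+{G_s^+}^*]$ is non-negative and $\nu^+$-integrable, so that $\lambda:=\trace[G_s^+ M^+{G_s^+}^*]\,\nu^+$ is a finite positive Radon measure. The displayed identity then states that $\int c\dd((\varphi_s)_*\lambda)=\int c\dd\nu^+$ for all continuous compactly supported $c$, where $(\varphi_s)_*\lambda$ denotes the pushforward, characterised by $((\varphi_s)_*\lambda)(B')=\lambda(\varphi_s^{-1}(B'))=\lambda(\varphi_{-s}(B'))$. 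By the uniqueness part of the Riesz representation theorem this forces $(\varphi_s)_*\lambda=\nu^+$ as Borel measures.

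Finally, evaluating this equality of measures on the Borel set $\varphi_s(B)$ and using that $\varphi_s$ is a bijection yields
\[
\nu^+\bigl(\varphi_s(B)\bigr)=\bigl((\varphi_s)_*\lambda\bigr)\bigl(\varphi_s(B)\bigr)=\lambda\bigl(\varphi_{-s}(\varphi_s(B))\bigr)=\lambda(B)=\int_B\trace\bigl[G_s^+ M^+{G_s^+}^*\bigr]\dd\nu^+,
\]
which is the assertion (here $M^+$ is the restriction of $M$ to $SZ^+$). The only slightly delicate point — which I regard as the main, though routine, obstacle — is the passage from continuous compactly supported test functions to indicator functions of Borel sets. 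This is harmless here because $\nu^+$ is a finite Radon measure supported on the compact set $S^*(I\times M)$: one may either invoke uniqueness in the Riesz representation theorem directly, or approximate $\mathbf{1}_{\varphi_s(B)}$ from above by continuous functions and conclude by dominated convergence. It is also worth recording explicitly that $\trace[G_s^+ M^+{G_s^+}^*]\ge 0$, so that the right-hand side of the corollary is a genuine non-negative mass.
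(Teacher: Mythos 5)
Your argument is correct and takes essentially the same route as the paper: the corollary is just the propagation identity of the preceding proposition applied, after restriction to $SZ^+$ (legitimate since $\varphi_s$ preserves $\tau$ and hence each component), to the indicator $\mathbf{1}_B$, and your pushforward/Riesz-uniqueness step merely makes explicit the routine passage from continuous test functions to Borel sets that the paper leaves implicit in its one-line computation $\nu^+(\varphi_s(B))=\int \mathbf{1}_B\circ\varphi_{-s}\,\dd\nu^+=\int \mathbf{1}_B\trace[G_s^+M{G_s^+}^*]\dd\nu^+$. The only small inaccuracy is the claim that $\nu^+$ is finite and supported in a compact set $S^*(I\times M)$ — here the solutions live on all of $\mathbf{R}\times M$, so $\nu^+$ is a priori only a locally finite Radon measure — but this is harmless, since agreement on compactly supported continuous test functions still identifies the two Radon measures on all Borel sets.
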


\begin{proof}
\[
\nu^+(\varphi_s(B))=\int_{SZ^+}\mathbf 1_{\varphi_s(B)}\dd \nu^+ = \int_{SZ^+}\mathbf{1}_B\circ \varphi_{-s}\dd \nu^+=\int_{SZ^+}\mathbf{1}_B\trace[G_s^+M{G_s^+}^*]\dd \nu^+
\]

\end{proof}

The cocycle $G^+$ thus plays an important role here since it completely describes the evolution of the microlocal defect measure. We finish this section with a few useful remarks about $G^+$.

\paragraph*{} A direct calculation shows that the matrix $G^+$ satisfy the following cocycle formula : 
\begin{equation}\label{eq:cocycleformula}
\forall \rho\in S^*M, \; \forall s,t\in \mathbf R,\; \; G_{s+t}^+(\rho)=G^+_{t}(\phi_s(\rho))G^+_{s}(\rho)\pp
\end{equation}
Indeed if we differentiate the right side with respect to $s$ we get 
\[
\begin{array}{rcl}
\partial_s G^+_{t}(\phi_s(\rho))G^+_{s}(\rho) &=&\displaystyle G_t^+(\phi_s(\rho))\big[\{p,G^+_s\}(\rho)-G_s^+(\rho)a(\rho)\big]+\{p,G_t^+\circ\phi_s\}(\rho)G_s^+(\rho)\\
\, &=&\displaystyle \{p, (G^+_{t}\circ\phi_s)G^+_{s}\}(\rho)-G^+_{t}(\phi_s(\rho))G^+_{s}(\rho)a(\rho)\pp
\end{array}
\]
The matrices $(G^+_{t}\circ\phi_s)G^+_{s}$ and $G_{s+t}^+$ thus satisfy the same differential equation with the same initial condition and are consequently  equal. This cocycle formula gives us a second differential equation satisfied by $G^+$. For every $(x_0,\xi_0)\in S^*M$ 
\[
\partial_t G_t^+(x_0,\xi_0)=\lim_{h\to 0} \frac{G_{t+h}^+(x_0,\xi_0)-G_t^+(x_0,\xi_0)}{h}\;\; \text{ and } \;\; G_{t+h}^+(x_0,\xi_0)=G_h^+(\phi_t(x_0,\xi_0))G_t^+(x_0,\xi_0)
\]
\[\text{ hence } \; \partial_t G_t^+(x_0,\xi_0)=\left.\partial_s G_s^+(\phi_t(x_0,\xi_0))\right|_{s=0} \cdot G_t^+(x_0,\xi_0) =-a(x_t)G_t^+(x_0,\xi_0)\vv \]
where $(x_t,\xi_t)=\phi_t(x_0,\xi_0)$. In accordance with the definition of $G^+$ given in the introduction we see that it is the solution of the differential equation
\begin{equation}\label{equationG}
\left\lbrace
\begin{array}{l}
G^+_0(x_0,\xi_0)=\mathrm{Id}_n \\
\partial_t G^+_t(x_0,\xi_0)=-a(x_t)G^+_t(x_0,\xi_0)\pp
\end{array}
\right.
\end{equation}
Let us add a last formula which will be useful for later. If we define $j:(x,\xi)\mapsto (x,-\xi)$ we have $\phi_s(j(\rho))=j(\phi_{-s}(\rho))$ and we deduce that $\partial_s (G_s^- \circ j) = -\{p,G_s^-\circ j\} + (G_s^-\circ j)a$.

\section{Estimation of the best decay rate}\label{section3}

\paragraph*{} Recall some definitions of the introduction. The following quantities are non-positive :
\begin{equation}
D(R)=\sup \{ \mathfrak{Re}(\lambda_j) : \lambda_j \in \mathrm{sp}(A_a), |\lambda_j|> R \}\vv \;\; D_0=\lim_{R\to 0^+} D(R) \; \text{ and } \; D_\infty=\lim_{R\to\infty}D(R)\pp
\end{equation}
For every $t\geq 0$ we chose $y_t$ a vector of $\mathbf C^n$ of euclidean norm $1$ such that 
\begin{equation}
G^+_{t}(x_0,\xi_0)G^+_{t}(x_0,\xi_0)^* y_t= \|G^+_{t}(x_0,\xi_0)\|^2_2y_t \pp
\end{equation}
The vector $y_t$ depends on $(x_0,\xi_0)$, even though it is not written. We then define for every $t>0$ the quantities 
\begin{equation}
C(t)=\frac{1}{t} \inf_{(x_0,\xi_0)\in S^*M} \int_0^t \langle a(x_s)y_s,y_s\rangle\dd s=\frac{-1}{t}\sup_{(x_0,\xi_0)\in S^*M} \ln \left( \|G_t^+(x_0;\xi_0)\|_2 \right)\, \text{ and }\, C_\infty=\lim_{t\to \infty} C(t) \pp
\end{equation}
We will see later that these definitions make sense and that they do not depend on the choice of $y_s$. Remember that $C(t)$ is non-negative. 

The remainder of this section is mainly dedicated to the proof of the formula for $\alpha$. Before starting let us just indicate the main steps of the proof. We first give an upper bound of $\alpha$ using Gaussian beams (also called coherent states). These are particular approximate solutions of the damped wave equation that are concentrated near a geodesic. In order to proves the lower bound of $\alpha$ we will use a high frequency inequality (Proposition \ref{lemmeinegaliteahauteenergie}) together with a decomposition  of solutions of \eqref{dampedwaveequation} in high and low frequencies. 

\subsection{Upper bound for $\alpha$}

\paragraph*{}Let $\lambda_j\in \mathrm{sp}(A_a)\backslash \{0\}$ and $u=(u_0,u_1)\in E_{\lambda_j}\backslash\{0\}$ be such that $A_a u=\lambda_j u$. The solution of \eqref{dampedwaveequation} then is $u(t,x)=e^{t\lambda_j}u_0(x)$ and we have $E(u,t)=e^{2t\mathfrak{Re}(\lambda_j)}E(u,0)$. Since $E(u,0)\neq 0$ we know that $\alpha\leq -2D(0)$. 

\paragraph*{} Showing that $\alpha\leq 2C_\infty$ is a bit more difficult as it requires us to construct Gaussian beams. We will start by constructing them on $\mathbf R^d$ endowed with a Riemaniann metric $g$. Gaussian beams are approximate solutions of the wave equation (in a sens made precise by \eqref{eq:solapprochee}) whose energy may be arbitrarily concentrated along a geodesic up to a fixed time $T>0$ (see \eqref{eq:concentrationenergiesolutionapproche}). They will allow us to construct exact solutions to the damped wave equation whose energy is also arbitrarily concentrated along a geodesic up to some time $T$. As always we will call $(x_t;\xi_t)=\phi_t(x_0,\xi_0)$ the points of the geodesic. We will follow and adapt the construction given in \cite{ral82} or \cite{mazu02} to fulfill our needs. 

\paragraph*{}We consider for every integer $k$ a function $u_k :  \mathbf R^d \to \mathbf R^n$ given by the formula 
\[
u_k(t,x)=k^{-1+d/4}b(t,x)\exp(ik \psi(t,x)) \omega
\]
where $\psi(t,x)=\langle\xi(t),(x-x(t)\rangle+\frac{1}{2}\langle M_t(x-x(t)),x-x(t)\rangle$ with $M_t$ a $d\times d$ symmetric matrix with positive definite imaginary part, $b$ is a continuous bounded function and $\omega$ is a vector of $\mathbf C^n$. In what follows $C$ represents a positive constant that can vary from one line to another but does not depends on $k$, however $C$ can depend on $T$.

\begin{theo}[\cite{ral82}]
It is possible to chose $M_t$ and $b$ such that 
\begin{equation}\label{eq:solapprochee}
\sup_{t\in [0;T]}\|\partial_t^2 u_k(t,\cdot)-\Delta_gu_k(t,\cdot)\|_{L^2(\mathbf R^d)}\leq Ck^{-1/2}\vv
\end{equation}
\begin{equation}\label{energiesolapprochee}
 \forall t\in [0;T] \; \lim_{k\to \infty}E(u_k,t) \;\text{ is positive, finite and does not depends on }t\vv
\end{equation}
\begin{equation}\label{eq:concentrationenergiesolutionapproche}
\sup_{t\in [0;T]} \int_{\mathbf R^d\backslash B(x_t,k^{-1/4})} |\dt u_k(t,\cdot)|^2+|\nabla u_k(t,\cdot)|_g^2 \dd x \leq C\exp(-\beta \sqrt k)\pp
\end{equation}
\end{theo}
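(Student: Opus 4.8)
The plan is the standard WKB/geometric optics construction of a Gaussian beam, carried out here for the free wave operator $\partial_t^2-\Delta_g$ (the matrix $a$ does not appear in this statement, and since the operator acts componentwise on $\mathbf C^n$-valued functions the fixed vector $\omega$ simply rides along, so one may as well argue scalar-valued and multiply by $\omega$ at the end). First I would substitute the ansatz $u_k=k^{-1+d/4}b\,e^{ik\psi}\omega$ into $\partial_t^2-\Delta_g$ and group the result by powers of $k$. The term of order $k^2$ is $-k^2b\big((\partial_t\psi)^2-|\nabla\psi|_g^2\big)e^{ik\psi}\omega$, so one wants the eikonal equation $(\partial_t\psi)^2=|\nabla\psi|_g^2$ to hold along the geodesic up to second order in $x-x_t$; since $\psi$ is quadratic in $x-x_t$, this amounts to three conditions. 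The order-zero condition fixes the normalization $|\xi_t|_g=|\tau|$, i.e. places $(x_t,\xi_t)$ on $S^*M$; the first-order condition says that $s\mapsto(t-2s\tau,\tau,\phi_s(x,\xi))$ is a null bicharacteristic of $p=|\xi|_g^2-\tau^2$, which is precisely the defining property of the curve we started from; and the second-order condition is a matrix Riccati differential equation for $M_t$. The term of order $k^1$ gives, after requiring it to vanish to order zero along $x=x_t$, a linear ODE for $t\mapsto b(t,x_t)$ — the transport equation — whose coefficients involve $\mathrm{Tr}\,M_t$ and the metric along the geodesic.

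Next I would solve these ODEs. For the Riccati equation the classical device is to write $M_t=\dot Y_tY_t^{-1}$ where $(Y_t,\dot Y_t)$ solves the linearization of the geodesic (Jacobi) system, a \emph{linear} ODE, hence globally defined on $[0,T]$; choosing $M_0=i\,\mathrm{Id}_d$ (symmetric, $\Im M_0>0$), one checks via a symplectic/Wronskian conservation law that $Y_t$ remains invertible and $\Im M_t>0$ for all $t\in[0,T]$, so $M_t$ exists on all of $[0,T]$ with a uniform positive lower bound $\Im M_t\geq\lambda\,\mathrm{Id}$. This is the one genuinely structural step and I expect it to be the main obstacle to write out cleanly; it is exactly the point treated in \cite{ral82} and \cite{mazu02}, and I would invoke it from there. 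The transport equation for $\beta(t):=b(t,x_t)$ is then a scalar linear ODE, solved with $\beta(0)=1$, so $\beta$ never vanishes on $[0,T]$; one takes $b(t,x)=\beta(t)\chi(x-x_t)$ with $\chi$ a smooth cutoff equal to $1$ near the origin, the cutoff affecting the analysis only through exponentially small terms thanks to the Gaussian localization below.

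With $M_t$, $b$ and $\omega$ so chosen, I would estimate the remainder $r_k:=\partial_t^2u_k-\Delta_gu_k$. Because $\psi$ is quadratic and the metric coefficients are smooth, the order-$k^2$ symbol residual $(\partial_t\psi)^2-|\nabla\psi|_g^2$ is a smooth function vanishing to second order along the curve, hence $O(|x-x_t|^3)$ locally; the order-$k^1$ residual is $O(|x-x_t|^2)$ and the order-$k^0$ residual is $O(1)$. Combining with $|e^{ik\psi(t,x)}|=e^{-k\,\Im\psi(t,x)}\leq e^{-c k|x-x_t|^2}$ and rescaling $y=\sqrt k\,(x-x_t)$ (so $\dd x=k^{-d/2}\dd y$) one gets, for the dominant (cubic, order-$k^2$) term, $\|r_k\|_{L^2}\lesssim k^{-1+d/4}\cdot k^2\cdot k^{-3/2}\cdot k^{-d/4}=k^{-1/2}$, the other terms contributing $O(k^{-1})$ and the cutoff contributing $O(e^{-c\sqrt k})$; all constants are uniform on $[0,T]$ through bounds on $M_t$, $\beta(t)$ and finitely many derivatives of $g$ along the geodesic. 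This yields \eqref{eq:solapprochee}.

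Finally, for \eqref{energiesolapprochee} I would evaluate $E(u_k,0)$ directly by Laplace's method: the leading contributions to $\partial_t u_k$ and $\nabla u_k$ are the order-$k$ terms $ik\,\partial_t\psi\,b\,e^{ik\psi}\omega$ and $ik\,\nabla\psi\,b\,e^{ik\psi}\omega$, so after the Gaussian integration the powers of $k$ cancel and $E(u_k,0)$ converges to $\tfrac12|\omega|^2|\beta(0)|^2\big((\partial_t\psi(0,x_0))^2+|\nabla\psi(0,x_0)|_g^2\big)\int_{\mathbf R^d}e^{-\langle\Im M_0\,y,y\rangle}\dd y$, which is finite and strictly positive since $\omega\neq0$ and $\beta(0)=1$. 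To see the limit is $t$-independent I would use $\tfrac{\dd}{\dd t}E(u_k,t)=\int_M\langle\partial_t u_k,\,\partial_t^2u_k-\Delta_gu_k\rangle\,\dd x$, bounded by $\|\partial_t u_k\|_{L^2}\|r_k\|_{L^2}\leq Ck^{-1/2}$ (the first factor being uniformly bounded, consistent with the bounded energy), so $|E(u_k,t)-E(u_k,0)|\leq CTk^{-1/2}\to0$ uniformly on $[0,T]$, giving \eqref{energiesolapprochee}. For \eqref{eq:concentrationenergiesolutionapproche}, the integrand of the energy outside the tube is $\lesssim k^{d/2}\,(\text{poly in }|x-x_t|)\,e^{-2\lambda k|x-x_t|^2}$, and integrating over $\{|x-x_t|>k^{-1/4}\}$, i.e. over $\{|y|>k^{1/4}\}$ after rescaling, leaves a Gaussian tail $\lesssim e^{-2\lambda\sqrt k}$ times polynomial factors, which is absorbed into $C e^{-\beta\sqrt k}$. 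This completes all three estimates.
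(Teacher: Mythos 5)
Your construction is correct and coincides with the one the paper relies on: this theorem is quoted from \cite{ral82} without proof, and what you outline (eikonal equation solved to second order along the ray, Riccati equation for $M_t$ handled via $M_t=\dot Y_tY_t^{-1}$ with the positive definiteness of the imaginary part preserved by the symplectic conservation law, transport equation for the amplitude, then the rescaled Gaussian estimates that appear here as Lemma \ref{lem:lemme 2}) is precisely Ralston's argument. The one slip is that if the transport equation is imposed only along the ray, the order-$k$ residual vanishes to first order, not second, in $x-x_t$, so it contributes $O(k^{-1/2})$ rather than $O(k^{-1})$ — which still lands within the stated bound \eqref{eq:solapprochee}, so nothing breaks.
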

Under these conditions we say that $u_k$ is a Gaussian beam. We also need a lemma of \cite{ral82}.
\begin{lemme}[\cite{ral82}]\label{lem:lemme 2}
Let $c\in L^\infty(\mathbf R^d)$ be a function satisfying $|x-x_0|^{-\alpha}c(x) \in L^\infty(\mathbf R^d)$ for some $\alpha \geq 0$ and some $x_0\in \mathbf R^d$, and let $A$ be a symmetric, positive definite, real $d\times d $ matrix. Then
\begin{equation}
\int_{\mathbf R^d}\left|c(x)\exp\big(-k\langle M(x-x_0),x-x_0\rangle\big)\right|^2\dd x \leq C k^{-d/2-\alpha}
\end{equation}
for some $C>0$ that does not depend on $k$.
\end{lemme}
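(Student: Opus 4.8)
This is a standard Gaussian moment estimate, and the plan is to reduce it to a pure Gaussian integral by rescaling about $x_0$. First I would turn the hypothesis into a pointwise bound: with $\kappa=\big\|\,|x-x_0|^{-\alpha}c\,\big\|_{L^\infty(\mathbf{R}^d)}$ one has $|c(x)|\le \kappa\,|x-x_0|^\alpha$ for almost every $x\in\mathbf{R}^d$, so, writing $A$ for the real, symmetric, positive definite matrix occurring in the exponent — if the matrix is instead complex with positive definite imaginary part one first uses $\big|\exp(-k\langle M y,y\rangle)\big|=\exp\big(-k\langle\mathfrak{Re}(M)y,y\rangle\big)$ to pass to its real part, which in the application of Section \ref{section3} is exactly the real positive definite matrix $\mathfrak{Im}(M_t)$ — one gets
\[
\int_{\mathbf{R}^d}\big|c(x)\exp(-k\langle A(x-x_0),x-x_0\rangle)\big|^2\,\dd x\ \le\ \kappa^2\int_{\mathbf{R}^d}|x-x_0|^{2\alpha}\,e^{-2k\langle A(x-x_0),x-x_0\rangle}\,\dd x.
\]

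Next I would perform the change of variables $y=x-x_0$ followed by $z=\sqrt{k}\,y$, for which $\dd y=k^{-d/2}\,\dd z$ and $|y|^{2\alpha}=k^{-\alpha}|z|^{2\alpha}$, obtaining
\[
\int_{\mathbf{R}^d}|x-x_0|^{2\alpha}\,e^{-2k\langle A(x-x_0),x-x_0\rangle}\,\dd x\ =\ k^{-d/2-\alpha}\int_{\mathbf{R}^d}|z|^{2\alpha}\,e^{-2\langle Az,z\rangle}\,\dd z.
\]
Since $A$ is positive definite there is $c_0>0$ (for instance $c_0=\lambda_{\min}(A)$) with $\langle Az,z\rangle\ge c_0|z|^2$, hence the last integral is bounded by $\int_{\mathbf{R}^d}|z|^{2\alpha}e^{-2c_0|z|^2}\,\dd z<\infty$, a finite number depending only on $d$, $\alpha$ and $c_0$. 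Combining the two displays yields the claim with $C=\kappa^2\int_{\mathbf{R}^d}|z|^{2\alpha}e^{-2\langle Az,z\rangle}\,\dd z$, independent of $k$. If one wants an explicit value, diagonalising $A=O^{\top}DO$ with $O$ orthogonal and $D$ diagonal positive, and changing variables $z\mapsto O^{\top}z$, evaluates the remaining integral as a product of one-dimensional Gaussian moments; but for the use made of the lemma in \eqref{eq:concentrationenergiesolutionapproche} the above bound suffices.

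There is essentially no hard step here; the only points requiring a word of care are that for $\alpha>0$ the weight $|x-x_0|^{2\alpha}$ is unbounded, so one genuinely needs the Gaussian decay — i.e.\ the positive-definiteness of $A$ — to guarantee integrability, and that the constant $C$ depends on $A$ only through $\lambda_{\min}(A)$, so it remains uniform as long as $A$ ranges over a compact set of positive definite matrices. This uniformity is precisely what makes the estimate usable uniformly in $t\in[0;T]$ when $A=\mathfrak{Im}(M_t)$ in the Gaussian-beam construction, and hence what allows one to pass from \eqref{eq:solapprochee} to the concentration statement \eqref{eq:concentrationenergiesolutionapproche}.
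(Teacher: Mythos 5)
Your argument is correct: the pointwise bound $|c(x)|\le\kappa|x-x_0|^{\alpha}$, the rescaling $z=\sqrt{k}\,(x-x_0)$ producing the factor $k^{-d/2-\alpha}$, and the finiteness of $\int|z|^{2\alpha}e^{-2\langle Az,z\rangle}\dd z$ by positive definiteness of $A$ constitute the standard proof of this Gaussian moment estimate, and your remark that in the application the relevant real matrix is $\mathfrak{Im}(M_t)$ (uniformly positive definite for $t\in[0;T]$) correctly resolves the $A$ versus $M$ notation in the statement. The paper itself gives no proof, quoting the lemma from \cite{ral82}, and your argument is exactly the expected one, so there is nothing to reconcile.
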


Using lemma \ref{lem:lemme 2} with $c=|b(t,\cdot)|$ and $\alpha=0$ we see that $\|u_k(t,\cdot)\|_{L^2(\mathbf R^d)}\leq C k^{-1/2}$. Let us now define the function  $v_k(t,x)=G_t^+(x_0,\xi_0)u_k(t,x)$, as we shall see it is an approximate solution of the damped wave equation. Indeed we have 
\[
(\dt^2-\Delta_g+2a\dt)v_k(t,x)=G_t^+(x_0,\xi_0)\left(\dt^2-
\Delta_g \right)u_k(t,x)+2(a(x)-a(x_t))G_t^+(x_0,\xi_0)\dt u_k(t,x)
\]
\[
 +\left(a(x_t)^2-\dt a(x_t)-2a(x)a(x_t)\right)G_t^+(x_0,\xi_0)u_k(t,x)\overset{\mathrm{def}}{=}f_k(t,x)
\]
and we need to show that $\|f_k(t,\cdot)\|_{L^2}\leq C k^{-1/2}$. In order to do that we only need to prove $ \|2(a(\cdot)-a(x_t))G_t^+(x_0,\xi_0)\dt u_k(\cdot,t)\|_{L^2}\leq Ck^{-1/2}$ because the other terms obviously satisfy the bound. Now since the function $x\mapsto|x-x_t|^{-1}\|a(x)-a(x_t)\|_2$ is in $L^\infty$ we can use lemma \ref{lem:lemme 2} on $2(a(\cdot)-a(x_t))G_t^+(x_0,\xi_0)\dt u_k(\cdot,t)$ and we finally get
\[
\sup_{t\in[0;T]}\|(\dt^2-\Delta_g+2a\dt)v_k(t,\cdot)\|_{L^2(\mathbf R^d)}\leq C k^{-1/2}\pp
\]
Moreover we see that $v_k$ still satisfies the properties \eqref{energiesolapprochee} and \eqref{eq:concentrationenergiesolutionapproche}, although now the limit of the energy of $v_k$ may vary with $t$ because $G_t^+(x_0,\xi_0)$ does. We finally define $w_k$ as the solution of \eqref{dampedwaveequation} with initial conditions $w_k(0,\cdot)=v_k(0,\cdot)$ and $\dt w_k(0,\cdot)=\dt v_k(0,\cdot)$. By definition of $w_k$ we have $(\dt^2-\Delta_g+2a\dt)v_k=(\dt^2-\Delta_g+2a\dt)(v_k-w_k)=f_k$ and thus
\[
\frac{\dd}{\dd t}E(v_k-w_k,t)=-2\int_{\mathbf R^d}\langle a\dt (v_k-w_k),\dt (v_k-w_k)\rangle\dd x +\int_{\mathbf{R}^d}\mathfrak{Re}\langle f_k,\dt (v_k-w_k)\rangle\dd x \pp
\]
The first term of the right hand side is negative and, using Cauchy-Schwarz, we can bound the second term by $Ck^{-1/2}$. Indeed we already know that $\|f_k\|_{L^2}\leq Ck^{-1/2}$ and $\|\partial_t(v_k-w_k)\|_{L^2}$ is uniformly bounded in $k\in \mathbf N$ and $t\in [0;T]$. Since $E(w_k-v_k,0)=0$ by integrating we get
\[ \sup_{t\in [0;T]} E(v_k-w_k,t)\leq C Tk^{-1/2}\pp
\]
In combination with the estimate \eqref{eq:concentrationenergiesolutionapproche} of $u_k$ we see that $w_k(t,\cdot)$ is concentrated around $x_t$, more precisely we have
\begin{equation}\label{gaussianbeamenergylocalisation}
\sup_{t\in [0;T]} \int_{\mathbf R^d\backslash B(x_t,k^{-1/4})} |\dt w_k(t,\cdot)|^2+|\nabla w_k(t,\cdot)|_g^2 \dd x \leq C T k^{-1/2}\pp
\end{equation}
Then we set $\omega$ such that $\lim_{k\to\infty}E(v_k,0)=1$ and $G^+_T(x_0,\xi_0)\omega=\|G^+_T(x_0,\xi_0)\|_2\omega$. According to the definition of $v_k$ we have
\[
E(v_k,T)=\frac{1}{2}\int_M |G_T^+(x_0,\xi_0)\dt u_k(T,\cdot)-a(x_T)G_T^+(x_0,\xi_0)u_k(T,\cdot)|^2+|G_T^+(x_0,\xi_0)\nabla u_k(T,\cdot)|^2 \dd x
\]
but $\|u_k(T,\cdot)\|_{L^2}\leq C k^{-1/2}$ so the term $a(x_T)G_T^+(x_0,\xi_0)u_k(T,\cdot)$ vanishes and we get \[\lim_{k\to \infty }E(v_k,T)= \|G^+_T(x_0,\xi_0)\|_2^2 \pp\] This in turn imply that $(w_k)_k$ is sequence of solutions to \eqref{dampedwaveequation} which satisfies $\lim_{k\to \infty}E(w_k,0)=1$ and $\lim_{k\to \infty }E(w_k,T)= \|G^+_T(x_0,\xi_0)\|_2^2$. Summing up the discussion so far, we have

\begin{proposition}
For any time $T>0$, any $\varepsilon >0$ and any $(x_0,\xi_0)\in S^*\mathbf R^d$ there exists a solution $u$ of the damped wave equation such that $E(u,0)=1$ and $\left|E(u,T)-\|G^+_T(x_0,\xi_0)\|_2^2\right|<\varepsilon$.
\end{proposition}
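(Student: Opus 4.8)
Most of the work has already been done in the preceding pages, so my plan is simply to normalize the exact solutions $w_k$ and pass to the limit. First I would note that $w_k$ and $v_k$ share their Cauchy data at $t=0$, hence $E(w_k,0)=E(v_k,0)$, and recall that $\omega$ was chosen so that $\lim_{k\to\infty}E(v_k,0)=1$; therefore $\lim_{k\to\infty}E(w_k,0)=1$, and in particular $E(w_k,0)>0$ for $k$ large, so that the rescaled solution introduced below makes sense.

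Next I would control $E(w_k,T)$ by comparing $w_k$ with $v_k$. Since the energy at a fixed time is a nonnegative quadratic form, $u\mapsto\sqrt{E(u,t)}$ obeys the triangle inequality, so the estimate $\sup_{t\in[0;T]}E(v_k-w_k,t)\leq CTk^{-1/2}$ proved above gives
\[
\bigl|\sqrt{E(w_k,T)}-\sqrt{E(v_k,T)}\bigr|\leq\sqrt{E(v_k-w_k,T)}\leq\sqrt{CT}\,k^{-1/4}\longrightarrow 0 .
\]
Together with the computation $\lim_{k\to\infty}E(v_k,T)=\|G^+_T(x_0,\xi_0)\|_2^2$ carried out just above — which used $\|u_k(T,\cdot)\|_{L^2}\leq Ck^{-1/2}$ to discard the term containing $a(x_T)$, together with the choice of $\omega$ as a top singular vector of $G^+_T(x_0,\xi_0)$ — this yields $\lim_{k\to\infty}E(w_k,T)=\|G^+_T(x_0,\xi_0)\|_2^2$.

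To finish, given $\varepsilon>0$ I would fix $k$ large enough that $E(w_k,0)\geq\tfrac12$ and $\bigl|E(w_k,T)/E(w_k,0)-\|G^+_T(x_0,\xi_0)\|_2^2\bigr|<\varepsilon$; this is possible since the numerator converges to $\|G^+_T(x_0,\xi_0)\|_2^2$ and the denominator to $1$. Setting $u:=w_k/\sqrt{E(w_k,0)}$, linearity of \eqref{dampedwaveequation} keeps $u$ a solution, and the degree-two homogeneity of the energy at a fixed time gives $E(u,t)=E(w_k,t)/E(w_k,0)$, whence $E(u,0)=1$ and $\bigl|E(u,T)-\|G^+_T(x_0,\xi_0)\|_2^2\bigr|<\varepsilon$, which is the claim. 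There is no serious obstacle remaining at this point: the one delicate step is the passage from the approximate solution $v_k$ to the exact solution $w_k$, which is precisely what the energy estimate $\sup_{t\in[0;T]}E(v_k-w_k,t)\leq CTk^{-1/2}$ takes care of, while the genuinely substantial work — the Gaussian beam construction of \cite{ral82} and the verification that $v_k=G^+_t(x_0,\xi_0)u_k$ solves the damped wave equation up to an $O(k^{-1/2})$ error in $L^2$ — was done before the statement.
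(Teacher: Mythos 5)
Your proof is correct and follows essentially the same route as the paper: the paper's ``proof'' is exactly the preceding Gaussian-beam discussion, which already establishes $\lim_k E(w_k,0)=1$ and, via the estimate $\sup_{t\in[0;T]}E(v_k-w_k,t)\leq CTk^{-1/2}$ and the choice of $\omega$, $\lim_k E(w_k,T)=\|G^+_T(x_0,\xi_0)\|_2^2$. Your only addition is the explicit normalization $u=w_k/\sqrt{E(w_k,0)}$ (together with the square-root triangle inequality for the energy), which cleanly supplies the exact condition $E(u,0)=1$ that the paper leaves implicit.
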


Using charts this result extends to the case of a compact Remannian manifold $(M,g)$ and we finally get 

\begin{proposition}\label{gaussianbeamprop}
For any time $T>0$, any $\varepsilon >0$ and any $(x_0,\xi_0)\in S^*M$ there exists a solution $u$ of the damped wave equation such that $E(u,0)=1$ and $\left|E(u,T)-\|G^+_T(x_0,\xi_0)\|_2^2\right|<\varepsilon$.
\end{proposition}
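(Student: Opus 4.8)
The plan is to exploit that the Gaussian beam $w_k$ constructed in the proof of the previous proposition only ever uses the metric and the damping in a shrinking tube around the geodesic, so that the statement is local and can be transplanted chart by chart. Indeed, by \eqref{eq:concentrationenergiesolutionapproche}, by \eqref{gaussianbeamenergylocalisation} and by the energy comparison $\sup_{[0,T]}E(v_k-w_k,t)\le CTk^{-1/2}$, both $v_k$ and $w_k$ are concentrated in energy inside $\bigcup_{t\in[0,T]}B_g(x_t,k^{-1/4})$ up to an error that tends to $0$ (exponentially so for $v_k$ itself). Hence only the values of $g$ and $a$ in a fixed $\delta$-neighbourhood of the compact arc $\{x_t:t\in[0,T]\}$ matter. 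First I would fix $\delta>0$ and, by a Lebesgue-number argument, a partition $0=t_0<t_1<\dots<t_N=T$ together with coordinate charts $(U_i,\varphi_i)$, $i=1,\dots,N$, such that $\bigcup_{t\in[t_{i-1},t_i]}B_g(x_t,\delta)\subset U_i$.

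On each $U_i$ I would transport $g$ and $a$ to $\varphi_i(U_i)\subset\mathbf R^d$ and extend them to smooth data on all of $\mathbf R^d$, keeping the extension of $a$ Hermitian positive semi-definite; this costs nothing, since on the tube the extended $a$ equals the true damping, so the comparison inequality $\frac{\dd}{\dd t}E(v_k-w_k,t)\le\mathfrak{Re}\int_{\mathbf R^d}\langle f_k,\dt(v_k-w_k)\rangle\,\dd x$ of the previous proposition is unchanged. I would then carry out the phase/amplitude construction of the previous proposition along the whole geodesic rather than over a single time interval: the objects $\psi$, $M_t$ and $b$ are governed by the eikonal, Riccati and transport equations along $(x_t,\xi_t)$, and under a transition $\varphi_j\circ\varphi_i^{-1}$ the transverse Gaussian matrix $M_t$ transforms by the Jacobian of the transition map in a way compatible with its evolution equation, so the pieces built in consecutive charts agree on the overlaps up to the required $O(k^{-1/2})$ accuracy; the cocycle formula \eqref{eq:cocycleformula} is exactly what makes the propagation of the amplitude vector fit together across the pieces. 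Choosing $\omega$ precisely as in the previous proposition (proportional to a right-singular vector of $G_T^+(x_0,\xi_0)$ for its largest singular value, normalised so that $\lim_k E(v_k,0)=1$) and multiplying the global beam by a cut-off supported in the $\delta$-tube — which, since the imaginary part of $M_t$ is positive definite uniformly on $[0,T]$, changes the $L^2$ norms of $w_k$ and of $(\dt^2-\Delta+2a\dt)v_k$ only by $O(e^{-\beta\sqrt k})$ — one obtains a genuine solution $w_k$ of the damped wave equation on $\mathbf R\times M$ with $\lim_k E(w_k,0)=1$ and $\lim_k E(w_k,T)=\|G_T^+(x_0,\xi_0)\|_2^2$. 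Taking $k$ large and $u=w_k$ concludes.

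The main obstacle is precisely the coherence of the construction across charts: one must check that the transverse quadratic phase produced in $U_i$, re-expressed in $U_{i+1}$, is again an admissible Gaussian-beam phase (positive definite imaginary part, solving the same evolution equation) with matching initial data, so that the global ansatz $u_k(t,x)=k^{-1+d/4}b(t,x)\exp(ik\psi(t,x))\omega$ is well defined near $x_t$ for every $t\in[0,T]$ and still satisfies $\sup_{[0,T]}\|\dt^2u_k-\Delta_gu_k\|_{L^2}\le Ck^{-1/2}$; once this is granted, the cut-off estimate, the comparison with the exact solution, and the normalisation all run verbatim as in the $\mathbf R^d$ case. Alternatively — and more cleanly — one avoids the patching by building $M_t$, $b$ and $\psi$ intrinsically on $M$ from the outset, as is in fact done in \cite{ral82}, using charts only to make sense of $u_k(t,\cdot)$ in a neighbourhood of the point $x_t$; the remainder of the argument of the previous proposition is insensitive to whether the ambient manifold is $\mathbf R^d$ or $M$.
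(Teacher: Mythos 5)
Your proposal is correct and follows essentially the same route as the paper, which disposes of this proposition in a single sentence (``using charts this result extends to the case of a compact Riemannian manifold'') after the detailed construction on $\mathbf R^d$: you reuse the same objects $u_k$, $v_k=G^+_t(x_0,\xi_0)u_k$, $w_k$, the same choice of $\omega$, and the same energy comparison, merely spelling out the chart-patching (or, equivalently, the intrinsic Ralston construction on $M$) that the paper leaves implicit. The localization estimates \eqref{eq:concentrationenergiesolutionapproche} and \eqref{gaussianbeamenergylocalisation} indeed justify the transfer, so your write-up is a faithful, more detailed version of the paper's argument.
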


Define $\Gamma_t=G^+_t(x_0,\xi_0)G^+_t(x_0,\xi_0)^*$ and, for every time $t$, chose $y_t$ a vector of euclidean norm $1$ such that $\displaystyle \Gamma_ty_t=\|\Gamma_t\|_2y_t$. Let us stress again that $y_t$ and $\Gamma_t$ both implicitly depends on $(x_0,\xi_0)$.  

\begin{proposition}\label{prop:formulenormeG}
\[\|G_t^+(x_0,\xi_0)\|_2^2=\|\Gamma_t\|_2=\exp\left(-2\int_0^t \big\langle a(x_s)y_s,y_s\big\rangle\dd t\right)\]
\end{proposition}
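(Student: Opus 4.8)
The plan is to track the matrix-valued function $\Gamma_t = G^+_t(x_0,\xi_0)G^+_t(x_0,\xi_0)^*$ along the geodesic and to extract the scalar quantity $\|\Gamma_t\|_2$ from a differential inequality (in fact, an identity at the top eigenvalue) governing its largest eigenvalue. The first equality $\|G_t^+\|_2^2 = \|\Gamma_t\|_2$ is just the standard identity relating the operator norm of a matrix to the largest eigenvalue of $G^+G^{+*}$ (equivalently, the square of the largest singular value), so nothing is needed there. For the second equality, I would differentiate $\Gamma_t$ using \eqref{equationG}: since $\partial_t G^+_t = -a(x_t)G^+_t$ and $a$ is Hermitian, one gets
\[
\partial_t \Gamma_t = -a(x_t)\Gamma_t - \Gamma_t a(x_t).
\]
This is the key ODE; note it does not by itself give a closed ODE for $\|\Gamma_t\|_2$ because $a(x_t)$ and $\Gamma_t$ need not commute.

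Next I would use first-order eigenvalue perturbation theory (the Hellmann–Feynman / Rellich formula). For a symmetric, smoothly-varying matrix whose top eigenvalue is simple, the derivative of that eigenvalue is $\langle (\partial_t \Gamma_t) y_t, y_t\rangle$ where $y_t$ is the corresponding unit eigenvector. Plugging in the ODE above and using $\Gamma_t y_t = \|\Gamma_t\|_2 y_t$,
\[
\partial_t \|\Gamma_t\|_2 = \langle(-a(x_t)\Gamma_t - \Gamma_t a(x_t))y_t,y_t\rangle = -2\|\Gamma_t\|_2\langle a(x_t)y_t,y_t\rangle,
\]
using that $a(x_t)$ is Hermitian. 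Since $\|\Gamma_0\|_2 = 1$, integrating this scalar linear ODE gives exactly
\[
\|\Gamma_t\|_2 = \exp\!\left(-2\int_0^t \langle a(x_s)y_s,y_s\rangle\,\dd s\right),
\]
which is the claimed formula. (The apparent mismatch between $\dd t$ inside the displayed integral in the statement and the integration variable $s$ is a typo; the integrand is $\langle a(x_s)y_s,y_s\rangle\,\dd s$.)

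The main obstacle is the regularity issue lurking in the perturbation argument: the top eigenvalue $\|\Gamma_t\|_2$ need not be simple for all $t$, and $y_t$ is only measurable a priori, not $C^1$, at crossings. I would handle this as in standard treatments of Lyapunov exponents and cocycles: $t\mapsto\|\Gamma_t\|_2$ is Lipschitz (it is a norm of a $C^1$ matrix path), hence differentiable almost everywhere; at every point of differentiability where the top eigenvalue happens to be simple the computation above applies verbatim, and the set where it is not simple can be shown to contribute nothing — either by a Danskin-type argument ($\|\Gamma_t\|_2 = \max_{|y|=1}\langle\Gamma_t y,y\rangle$, so its derivative, where it exists, equals $\langle(\partial_t\Gamma_t)y_t,y_t\rangle$ for \emph{any} maximizing $y_t$), or by noting that the right-hand side $-2\|\Gamma_t\|_2\langle a(x_t)y_t,y_t\rangle$ is independent of the choice of maximizing $y_t$ since all such $y_t$ lie in the top eigenspace on which $\Gamma_t$ acts as a scalar. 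This also yields the promised remark that the formula, and hence $C(t)$ in \eqref{eq:formuleCinftyagainagain}, does not depend on the choice of $y_s$. The alternative — differentiating $\ln\|G^+_t\|_2$ directly via $\partial_t\|G^+_t v\|$ for the worst-case input $v$ — leads to the same computation and the same regularity caveat, so I would simply invoke the Danskin/envelope theorem once and move on.
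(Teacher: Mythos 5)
Your proposal is correct and follows essentially the same route as the paper: the paper's comparison of $t\mapsto\langle\Gamma_t y_t,y_t\rangle$ with the smooth minorant $t\mapsto\langle\Gamma_t y_{t_0},y_{t_0}\rangle$ touching it at $t_0$ is exactly the Danskin/envelope argument you invoke, combined with the same Lipschitz/a.e.-differentiability observation. The only cosmetic difference is that the paper integrates the a.e.\ identity by showing the quotient $\Phi(t)=\|\Gamma_t\|_2\exp\left(2\int_0^t\langle a(x_s)y_s,y_s\rangle\,\dd s\right)^{\phantom{1}}$ is absolutely continuous with vanishing derivative, which your Lipschitz remark already justifies.
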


\begin{proof}
The only thing to prove is the second equality. The map  $t\mapsto \Gamma_t$ is the solution of the differential equation
\begin{equation}\label{defmatB}
\left\lbrace
\begin{array}{l}
\Gamma_0=\mathrm{Id}_n \\
\dt \Gamma_t=-a(x_t)\Gamma_t-\Gamma_ta(x_t)\vv
\end{array}
\right.
\end{equation}
it is hence $C^\infty$ and \textit{a fortiori} locally Lipschitz. Consequently the map $t\mapsto \|\Gamma_t\|_2$ is also locally Lipschitz\footnote{We cannot  really do better than that in terms of regularity.}, this imply that it is differentiable for almost every $t$. Since $\Gamma_t$ is hermitian positive definite $\|\Gamma_t\|_2=\langle\Gamma_ty_t,y_t\rangle$ and if $z$ is any other vector of norm $1$ then $\|\Gamma_t\|_2\geq \langle\Gamma_tz,z\rangle$. Fix a time $t_0$, we then have
\[\begin{array}{rcl}
\left.\dt \langle\Gamma_ty_{t_0},y_{t_0}\rangle\right|_{t=t_0} &=& -\big\langle[a(x_{t_0})\Gamma_{t_0}+\Gamma_{t_0}a(x_{t_0})] y_{t_0},y_{t_0}\big\rangle\\
\, &=& -2\|\Gamma_{t_0}\|_2\langle a(x_{t_0})y_{t_0},y_{t_0}\rangle\pp
\end{array}
\]
We know that $\langle\Gamma_ty_t,y_t\rangle\geq \langle\Gamma_t y_{t_0},y_{t_0}\rangle$ for every $t$ and there is equality when $t=t_0$. If $\|\Gamma_t\|_2$ is differentiable at $t_0$ we deduce that at this point the derivatives of the two functions $t\mapsto \langle\Gamma_ty_t,y_t\rangle$ and $t\mapsto\langle\Gamma_t y_{t_0},y_{t_0}\rangle$ must be the same. Hence for almost every time $t$ 
\[
\dt \|\Gamma_t\|_2= \dt \langle\Gamma_t y_t,y_t\rangle = -2\|\Gamma_t\|_2\langle a(x_t)y_t,y_t\rangle \pp
\]
To finish the proof we just need to see that the function
\[
\Phi:t\mapsto \frac{\|\Gamma_t\|_2}{\displaystyle\exp\left(-2\int_0^t \big\langle a(x_s)y_s,y_s\big\rangle\dd s\right)}
\]
is Lipschitz on every bounded interval $[0;T]$ and \textit{a fortiori} absolutely continuous. From $\Phi'=0$ a.e. we deduce that $\Phi$ is constant and since $\Phi(0)=1$ this finishes the proof.

\end{proof}
Notice that the choice of $y_t$ is not unique and that $t\mapsto y_t$ is not continuous in general. On the other hand the derivative of $\|\Gamma_{t}\|_2$ is uniquely defined almost everywhere, so that the choice of $y_t$ has no importance. 
Therefore we have
\[
C(t)\overset{\mathrm{def}}{=}\frac{-1}{t}\sup_{(x_0,\xi_0)\in S^*M} \ln \left( \|G_t^+(x_0;\xi_0)\|_2 \right)=\frac{1}{t} \inf_{(x_0,\xi_0)\in S^*M} \int_0^t \langle a(x_s)y_s,y_s\rangle\dd s\pp
\]
This function is obviously non-negative but in order to proves other properties it is easier to work with $\exp(-tC(t))=\sup_{\rho\in S^*M}\|G_t^+(\rho)\|_2$ . The function $a$ is continuous on $M$ and the geodesic flow $\phi$ is continuous on $\mathbf R\times S^*M$, since $G^+$ is defined as the solution of \eqref{equationG} the function $\|G^+\|$ is in turn continuous on $\mathbf R\times S^*M$. As $S^*M$ is compact, $t\mapsto \exp(-tC(t))$ is continuous and so is $t\mapsto C(t)$. We now show that $t\mapsto tC(t)$ is sub-additive : let $t$ and $s$ be two non negative reals, we have the following equivalences :
\[
(t+s)C(t+s)\geq tC(t)+sC(s) \Longleftrightarrow \exp(-2(t+s)C(t+s))\leq \exp(-2tC(t))\exp(-2sC(s))
\]
\begin{equation}\label{sousadditiviteinegalite}
\Longleftrightarrow \sup_{(x,\xi)\in S^*M} \|G_{t+s}^+{G_{t+s}^+}^*\|_2 \leq \big(\sup_{(x,\xi)\in S^*M} \|G_{t}^+{G_{t}^+}^*\|_2\big)\cdot \big(\sup_{(x,\xi)\in S^*M} \|G_{s}^+{G_{s}^+}^*\|_2\big)\pp
\end{equation}
Recall the cocycle formula $G_{s+t}^+(\rho)=G^+_{t}(\phi_s(\rho))G^+_{s}(\rho)$, it follows that
\[
G_{t+s}^+(\rho){G_{t+s}^+(\rho)}^*=G_t^+(\phi_s(\rho))G_s^+(\rho){G_s^+(\rho)}^*{G_t^+(\phi_s(\rho))}^*
\]
and since for any two matrices $R$ and $S$ we have $\|S^*R^*RS\|_2\leq \|S^*S\|_2\|R^*R\|_2$, the inequality \eqref{sousadditiviteinegalite} is satisfied and $t\mapsto tC(t)$ is indeed sub-additive. By application of Fekete's sub-additive lemma we deduce that $C(t)$ admits a limit when $t\to \infty$ and that $C(t)\leq C_\infty$ for every positive $t$.

By combining the results of this section it is now easy to prove that $\alpha\leq 2C_\infty$. Assume that $\alpha=2C_\infty+4\eta$ for some $\eta >0$, this means that there exists some constant $C>0$ such that 
\begin{equation}\label{jesaispasquoimettrecommelabel}
\forall t\geq0,\; \forall u \in H,\;E(u,t)\leq CE(u,0)\exp(-2t(C_\infty+\eta)). 
\end{equation}
Now pick some $T$ such that $C\exp(-2T(C_\infty+\eta))<\exp(-T(2C_\infty+\eta))$. Since $C_\infty\geq C(T)$ we have $\exp(-T(2C_\infty+\eta))\leq\exp(-T(2C(T)+\eta))$ but using proposition \ref{gaussianbeamprop} there exist some $u\in H$ such that 
\[
E(u,T)>E(u,0)\exp(-T(2C(T)+\eta))>CE(u,0)\exp(-2T(C_\infty+\eta))
\]
which contradicts \eqref{jesaispasquoimettrecommelabel} and concludes the proof of $\alpha\leq 2 C_\infty$.

\subsection{Lower bound for $\alpha$}

We are now going to use the results of Section $2$ in order to prove the following energy inequality for the high frequencies.  

\begin{proposition}\label{lemmeinegaliteahauteenergie}
For every time $T>0$ and every $\varepsilon >0$ there exists a constant $C(\varepsilon,T)$ such that for every $u=(u_0;u_1)$ in $H$ we have

\begin{equation}\label{inegaliteahauteenergie}
E(u,T)\leq (1+\varepsilon)\displaystyle e^{-2TC(T)} E(u,0)+C(\varepsilon,T)\|u_0,u_1\|^2_{L^2\oplus H^{-1}}\pp
\end{equation}
\end{proposition}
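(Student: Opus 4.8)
The plan is to argue by contradiction: suppose the inequality fails. Then for some fixed $T>0$ and $\varepsilon>0$ there is a sequence $(u^k_0,u^k_1)\in H$ of solutions $u^k$ of \eqref{dampedwaveequation} with
\[
E(u^k,T) > (1+\varepsilon)e^{-2TC(T)}E(u^k,0) + k\,\|u^k_0,u^k_1\|^2_{L^2\oplus H^{-1}}\pp
\]
Normalize so that $E(u^k,0)=1$; since the energy is non-increasing, $(u^k)$ is bounded in $H^1(I\times M)^n$ for $I=[0,T]$ (one recovers the $H^1$ bound in space-time from the uniform-in-time energy bound plus the equation), and the inequality forces $\|u^k_0,u^k_1\|_{L^2\oplus H^{-1}}\to 0$. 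Hence, after extracting a subsequence, $u^k\rightharpoonup 0$ weakly in $H^1_{\loc}(\mathbf R\times M)^n$ (the weak limit is $0$ because of the strong convergence to $0$ of the lower-order norm together with uniqueness of limits), and we may assume $(u^k)$ is pure with microlocal defect measure $\mu=M\dd\nu$ on $S^*(I\times M)$.

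The core of the argument is to express $\liminf E(u^k,T)$ and $\lim E(u^k,0)$ in terms of $\mu$ and then use the propagation result (Corollary \ref{propagationmesure}). First I would localize in time: pick cutoffs $\chi_0,\chi_T$ supported near $0$ and near $T$ and write the energies at times $0$ and $T$ using pseudodifferential operators whose symbols, on the characteristic set $\{p=0\}$, compute $\tfrac12(|\partial_t u|^2+|\nabla u|^2)$; concretely the energy density corresponds to the symbol $\tfrac12(\tau^2+|\xi|_g^2)\mathrm{Id}_n = \tfrac14\mathrm{Id}_n$ on $S^*(\mathbf R\times M)$ since $\nu$ lives on $\tau^2+|\xi|_g^2=1/2$, so the limiting energy at a time-slice $t=t_0$ is $\tfrac14$ times the mass of the time-slice of $\nu$. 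Using that $\nu$ is supported on $\{\tau=\pm1/2\}$, split $\mu=\mu^++\mu^-$; on $SZ^+$ the flow $\varphi_s$ pushes the slice $\{t=0\}$ to $\{t=T\}$ after time $s=T/(2\cdot(-1/2))=-T$... more carefully, on $SZ^+$ one has $\tau=-1/2$ so $\varphi_s(t,\tau,x,\xi)=(t+s,\tau,\phi_s(x,\xi))$, hence the slice at $t=0$ is carried to $t=T$ by $\varphi_T$, and Corollary \ref{propagationmesure} gives $\nu^+(\varphi_T(B))=\int_B\trace[G^+_TM{G^+_T}^*]\dd\nu^+$. Since $M$ is Hermitian positive semidefinite with $\trace M=1$, we bound $\trace[G^+_T M {G^+_T}^*]\le \|G^+_T\|_2^2 = e^{-2TC(T)}$ uniformly (by the definition of $C(T)$ and Proposition \ref{prop:formulenormeG}), and the analogous statement on $SZ^-$ using the $j$-conjugated cocycle from the end of Section \ref{section2}. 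Putting the pieces together yields
\[
\lim_{k\to\infty}E(u^k,T) \le e^{-2TC(T)}\lim_{k\to\infty}E(u^k,0) = e^{-2TC(T)},
\]
which contradicts $E(u^k,T) > (1+\varepsilon)e^{-2TC(T)}$ for large $k$.

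The main obstacle I anticipate is making the passage from the energy functionals to the microlocal defect measure fully rigorous: the energy at a fixed time slice is not directly a pseudodifferential pairing of the type appearing in \eqref{mdm} (which involves space-time test operators), so one must either integrate in $t$ against a cutoff and use that $\nu$ is invariant enough under the time-translation part of $\varphi$ near each slice, or invoke a trace/restriction argument for $\nu$ on the slices $\{t=0\}$ and $\{t=T\}$ — and one has to check that there is no loss of energy ``at infinity in $\xi$'', i.e. that no energy escapes into frequencies not seen by the measure, which is precisely where the $H^{-1}$ remainder term $C(\varepsilon,T)\|u_0,u_1\|^2_{L^2\oplus H^{-1}}$ comes in and why the statement has the factor $(1+\varepsilon)$ rather than a clean inequality. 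I would handle the low-frequency/compactness part by the standard observation that the embedding $H^1\hookrightarrow L^2$ and $L^2 \hookrightarrow H^{-1}$ are compact, so the ``defect'' is entirely captured by $\mu$, and the $(1+\varepsilon)$ absorbs the error between the genuine energy and its microlocal principal part.
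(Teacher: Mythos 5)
Your overall strategy is the paper's: argue by contradiction, normalize $E(u^k,0)=1$, use the $k\|u_0^k,u_1^k\|^2_{L^2\oplus H^{-1}}$ term to force weak convergence to $0$, extract a pure subsequence, and then exploit Corollary \ref{propagationmesure} together with the bound $\trace[G^+_tM{G^+_t}^*]\leq \|G^+_t\|_2^2\trace(M)$ (plus the $j$-conjugated cocycle on $SZ^-$). But there is a genuine gap exactly where you flag "the main obstacle": the identification of $\lim_k E(u^k,T)$ and $\lim_k E(u^k,0)$ with masses of \emph{time slices} of $\nu$ is never established, and the trace/restriction route you suggest does not work as stated -- a fixed-time energy is not a pairing of the form \eqref{mdm}, a hypersurface $\{t=t_0\}$ is a priori a $\nu$-negligible set, and nothing guarantees that $E(u^k,t_0)$ converges to any slice mass without a disintegration argument you have not supplied. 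Since your final contradiction ("$\lim E(u^k,T)\le e^{-2TC(T)}$, contradicting $E(u^k,T)>(1+\varepsilon)e^{-2TC(T)}$") rests entirely on this unproved slice identification, the proof is incomplete at its technical heart.

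The missing idea, which is how the paper closes this gap, is to use the monotonicity of the energy to replace slices by time windows: from the contradiction hypothesis one gets, for every $\eta\in\,]0;T[$ and every nonnegative $\psi\in C^\infty_0(]0;\eta[)$, the inequality $\int_{T-\eta}^T\psi(T-t)E(u^k,t)\,\dd t\geq(1+\varepsilon)e^{-2TC(T)}\int_0^\eta\psi(t)E(u^k,t)\,\dd t$, and time-integrated energies \emph{do} pass to the limit through the measure, yielding $\nu(\{t\in\,]T-\eta;T[\})\geq(1+\varepsilon)e^{-2TC(T)}\,\nu(\{t\in\,]0;\eta[\})$. On the other hand, pushing the window $\{t\in\,]0;\eta[\}$ forward by $\varphi_{T-\eta}$ (not $\varphi_T$) gives the reverse bound with constant $e^{-2(T-\eta)C(T-\eta)}$; one then needs the continuity of $s\mapsto e^{-2sC(s)}$ so that, for $\eta$ small, the factor $(1+\varepsilon)$ beats the $\eta$-loss and forces $\nu(\{t\in\,]0;\eta[\})=0$, whence strong $H^1$ convergence near $t=0$ and a contradiction with $E(u^k,0)=1$. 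Note in particular that the role of $(1+\varepsilon)$ is to absorb precisely this window error between $C(T)$ and $C(T-\eta)$, not a loss of energy "at infinity in $\xi$" as you suggest; the $L^2\oplus H^{-1}$ remainder's role (killing the weak limit) you did identify correctly.
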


\begin{proof}
Assume that \eqref{inegaliteahauteenergie} is false, in this case for some $T$, some $\varepsilon$ and every integer $k\geq 1$ there is a solution $u^k=(u_0^k,u_1^k)$ of \eqref{dampedwaveequation} satisfying 
\begin{equation}\label{3.3}
E(u^k,T)\geq (1+\varepsilon)\displaystyle e^{-2TC(T)}E(u,0)+ k\|u_0^k,u_1^k\|^2_{L^2\oplus H^{-1}} \; \text{ and }\; E(u^k,0)=1\pp
\end{equation}
First we show that the sequence $(u^k)$ is bounded in $H^1(I\times M)$, where $I=[-2T;2T]$. Indeed $E(u^k,0)=1$ and \eqref{formuleenergie} implies that $E(u^k,-2T)$ is bounded uniformly in $k$. Since the energy is non increasing the sequence $(u^k)$ must be bounded in $H^1(I\times M)$. Moreover $\|u_0^k,u_1^k\|^2_{L^2\oplus H^{-1}}\leq E(u^k,T)/k\leq 1/k$, so $(u^k)$ converges to $0$ in $L^2(I\times M)$ and so it weakly converges to $0$ in $H^1(I\times M)$. If we are to extract a sub-sequence we might as well assume that $(u^k)$ admits $\mu=M\nu$ (with $\trace(M)=1$) as microlocal defect measure. As the energy is non increasing it follows from \eqref{3.3} that for every $\eta\in]0;T[$ and every non negative function $\psi\in C^\infty_0(]0;\eta[)$,
\[
\int_{T-\eta}^T \psi(T-t)E(u^k,t)\dd t \geq (1+\varepsilon)e^{-2TC(T)}\int_0^\eta \psi(t)E(u^k,t) \dd t\pp
\]
Since this is true for every function $\psi$, taking the limit $k\to\infty$ in the previous inequality gives
\begin{equation}\label{3.5}
\nu(S^*(\mathbf R \times M)\cap t\in]T-\eta,T[)\geq (1+\varepsilon)e^{-2TC(T)}\nu(S^*(\mathbf R \times M)\cap t\in]0;\eta[)\pp
\end{equation}
On the other hand Corollary \ref{propagationmesure} gives us
\[
\begin{array}{rcl}
\nu^+(SZ^+\cap t\in ]T-\eta;T[)&=& \nu^+(\varphi_{T-\eta}(SZ^+\cap t\in ]0;\eta[))\\
\, &=& \displaystyle \int_{SZ^+\cap t\in ]0;\eta[} \trace[G_{T-\eta}^+M{G_{T-\eta}^+}^*]\dd\nu \\ 
\; &  \leq &\displaystyle \sup_{(x;\xi)\in S^*(M)} \|G_{T-\eta}^+(x,\xi)\|_{2}^2 \nu^+(SZ^+\cap t\in ]0;\eta[)\\
\; & = & e^{-2(T-\eta)C(T-\eta)}\nu^+(SZ^+\cap t\in ]0;\eta[)\pp

\end{array}
\]
To get this upper bound we used the following properties.
\[
\trace[G_{T-\eta}^+M{G_{T-\eta}^+}^*]=\trace[{G_{T-\eta}^+}^*G_{T-\eta}^+M]\leq \|G_{T-\eta}^+\|_2^2\trace(M) =\|G_{T-\eta}^+\|_2^2
\]
We then use the same argument on $SZ^-$. With the relation  $\partial_s (G_s^- \circ j) = -\{p,G_s^-\circ j\} +2 (G_s^-\circ j)a$ given at the end of section 2 we find
\[
\begin{array}{rcl}
\nu^-(SZ^-\cap t\in ]T-\eta;T[)\leq e^{-2(T-\eta)C(T-\eta)} \nu^-(SZ^-\cap t\in ]0;\eta[)\pp

\end{array}
\]
By combining $\nu^+$ and $\nu^-$ together, we get
\begin{equation}\label{3.6}
\nu(S^*(\mathbf R \times M)\cap t\in]T-\eta,T[)\leq e^{-2(T-\eta)C(T-\eta)}\nu(S^*(\mathbf R \times M)\cap t\in]0;\eta[)\pp
\end{equation}
Recall that $\displaystyle s\mapsto e^{-2sC(s)}$ is continuous, so for $\eta$ sufficiently small the inequalities \eqref{3.5} and \eqref{3.6} imply that $\nu(S^*(\mathbf R \times M)\cap t\in]0;\eta[)=0$. Consequently the sequence $(u^k)$ strongly converges to $0$ in $H^1(]0;\eta[\times M)$ and thus it also strongly converges to $0$ in $H^1(I\times M)$. This contradicts the hypothesis $E(u^k,0)=1$ and finishes the proof.

\end{proof}

The remainder of the proof for the formula of $\alpha$ is completely borrowed from the article of Lebeau (\cite{leb93}), indeed it works verbatim\footnote{Although the article of Lebeau is in french, so any translation error that may occur is my mistake.}. Let $A_a^*$ be the adjoint of $A_a$, we have $-A_a^*=\begin{pmatrix}
0 & \mathrm{Id} \\
\Delta & +2a
\end{pmatrix}
$ and the spectrum of $A^*_a$ is the complex conjugate of the spectrum of  $A_a$. Let us call $E_{\lambda_j}^*$ the generalized eigenvector space of $A_a^*$ associated with the spectral value $\overline{\lambda_j}$. For $N\geq 1$ we set 
\[
H_N=\left\lbrace x\in H : (x|y)_H=0, \; \forall y \in \bigoplus_{|\lambda_j|\leq N}E_{\lambda_j}^*\right\rbrace \pp
\] 
The space $H_N$ is invariant by the evolution operator $e^{tA_a}$. To see that take $x\in H_N$ and $\{y_l\}$ a basis of the finite dimension vector space $\displaystyle \bigoplus_{|\lambda_j|\leq N} E_{\lambda_j}^*\subset D(A_a^*)$, we have 
\[
\dt (e^{tA}x|y_l)=(e^{tA}x|A_a^*y_l)=\sum c_{l,k}(e^{tA}x|y_k) \text{ and so } (e^{tA}x|y_l)=0 \pp
\]
Set $H'=L^2\oplus H^{-1}$ and let $\theta_n$ be the norm of the embedding of $H_N$ in $H'$. The operator $-A^*_a$ is a compact perturbation of the skew-adjoint operator $A_0$, this implies that the family $\{E^*_{\lambda_j}\}_j$ is total in $H$ (see \cite{gokr69}, chapter 5 theorem 10.1) and thus that $\lim \theta_N =0$. Let us assume that $2\min\{-D_0,C_\infty\}>0$, or otherwise there is nothing to prove. Fix $\eta>0$ small enough so that $\beta=2\min\{-D_0,C_\infty\}-\eta$ is positive. Now take $T$ such that $4|C_\infty-C(T)|<\eta$ and $2\log(3)<\eta T$ and finally $N$ such that $C(1,T)\theta^2_N\leq e^{-2TC(T)}$. It follows from the previous proposition that  
\[
\forall u\in H_N, \; E(u,T)\leq 3e^{-2TC(T)}E(u,0),
\]
and since $H_N$ is stable by the evolution 
\[
\forall k \in \mathbf N ,\;\forall u\in H_N,\; E(u,kT)\leq 3^ke^{-2kTC(T)}E(u,0)\pp
\]
The energy is non increasing, so there exists a real $B>0$ such that 
\begin{equation}\label{decroissanceenergiehautefrequence}
\forall t\geq 0,\; \forall u\in H_N,\; E(u,t)\leq Be^{-\beta t } E(u,0).
\end{equation}
Let $\gamma$ be a path circling around $\{\lambda_j : |\lambda_j|\leq N\}$ clockwise and $\Pi=\frac{1}{2i\pi}\int_\gamma \frac{\dd \lambda}{\lambda-A_a}$ be the spectral projector on $W_N=\bigoplus_{|\lambda_j|\leq N} E_{\lambda_j}$. In this case $\Pi^*$ is the spectral projector of $A_a^*$ on $\bigoplus_{|\lambda_j|\leq N} E_{\lambda_j}^*$ and so for every $u\in H$, one has
\begin{equation}\label{decompositionhautefrequencebassefrequences}
v=\Pi u \in W_N,\; w=(1-\Pi)u\in H_N\;\text{ and }\; u=v+w\pp
\end{equation}
Now $W_N$ is of finite dimension and since $\beta\leq -2D(0)$ there exists some $C$ such that  
\begin{equation}\label{decroissanceenergiebassefrequence}
\forall u\in W_N,\; \forall t\geq0 ,\; E(u,t)\leq Ce^{-\beta t} E(u,0)\pp
\end{equation}
Finally, since the decomposition \eqref{decompositionhautefrequencebassefrequences} is continuous, there exists some $C_0$ such that $E(v,0)+E(w,0)\leq C_0 E(u,0)$. Combining  \eqref{decroissanceenergiebassefrequence} and \eqref{decroissanceenergiehautefrequence} we get $\alpha\geq \beta$, thus finishing the proof of the formula for $\alpha$. 

\subsection{End of the proof of Theorem \ref{bigtheoreme} and proof of Theorem \ref{stabforte}}
\paragraph*{} We still need to prove properties \textbf{(i)}, \textbf{(ii)} and \textbf{(iii)} of Theorem \ref{bigtheoreme}. For \textbf{(ii)} there is nothing to do since it is already done in \cite{leb93} in the case $n=1$, which is sufficient. For \textbf{(i)} we can assume $C_\infty >0$ or otherwise there is nothing to prove. Notice that $E_{\lambda_j}\subset H_N$ as soon as $|\lambda_j |>N$, together with \eqref{decroissanceenergiehautefrequence} it means that, for every $\beta<2C_\infty$ and for $N$ large enough 
\[
|\lambda_j|>N \Rightarrow 2\mathfrak{Re}(\lambda_j)\leq -\beta\pp
\]
This implies $D_\infty \leq -C_\infty$ and proves \textbf{(i)}. Before we get to the last point of Theorem \ref{bigtheoreme} we are going to prove Theorem \ref{stabforte}.

\begin{proof}[Proof of Theorem \ref{stabforte}]
We start by proving \textbf{(ii)}$\Rightarrow$\textbf{(i)} by contraposition. Assume that \textbf{(i)} is not satisfied, if there is no weak stabilization then obviously \textbf{(ii)} is false. We can thus assume that there exists a point $(x_0,\xi_0)\in S^*M$ and a vector $y\in \mathbf C^n$ of euclidean norm $1$ such that $a(x_s)y=0$ for every time $s$. This means we have
\[
\partial_t {G^+_t}^*(x_0,\xi_0)y=-{G^+_t}^*(x_0,\xi_0)a(x_t)y=0
\]
\[\text{ hence }\; \|{G^+_t}(x_0,\xi_0){G^+_t}^*(x_0,\xi_0)\|_2=\|{G^+_t}^*(x_0,\xi_0)\|_2^2=\sup_{|v|=1}({G^+_t}^*(x_0,\xi_0)v,{G^+_t}^*(x_0,\xi_0)v)=1\pp\]
This implies that for every positive $t$ one has $C(t)=C_\infty=0$ and thus, by Theorem \ref{bigtheoreme}, it implies $\alpha=0$. This in turn shows that there is not strong stabilization and proves \textbf{(ii)}$\Rightarrow$\textbf{(i)}.

Reciprocally, assume that condition \textbf{(i)} is satisfied. Then by a compactness argument there exists $T>0$ such that  for all $(x_0,\xi_0)\in S^*M$ and all $y\in \mathbf C^n$ of euclidean norm $1$ 
\[
\int_0^T\langle a(x_t)y,y \rangle\dd t >0 \pp
\]
We begin by proving $C_\infty >0$, since $C_\infty\geq C(t)$ for every $t$ it suffices to show that $C(T)$ is positive. Let us assume that $C(T)=0$, then there exists $(x_0,\xi_0)\in S^*M$ and $y\in \mathbf C^n$ of norm $1$ such that $\Gamma_0(x_0;\xi_0)y=\Gamma_T(x_0,\xi_0)y=y$. We recall that $\Gamma_t=G_t^+G_t^+{}^*$ and that, according to proposition \ref{prop:formulenormeG}, $\|\Gamma_t\|_2$ is non increasing. As $\|\Gamma_0(x_0,\xi_0)\|_2=\|\Gamma_T(x_0,\xi_0)\|_2=1$ we know that $\|\Gamma_t(x_0,\xi_0)\|_2=1$ for every $t\in[0;T]$. Using Gaussian beams in section 3.1 we have proved that, for every $\varepsilon >0$ there exists a solution $u$ of the damped wave equation such that $|E(u,t)-\|\Gamma_t(x_0;\xi_0)y\|_2|<\varepsilon$ for every $t\in[0;T]$. Since the energy is non increasing it means that, for every $t\in[0;T]$ we have $\|\Gamma_t(x_0;\xi_0)y\|_2=1$ and thus that $\Gamma_t(x_0;\xi_0)y=\|\Gamma_t(x_0;\xi_0)\|_2y=y$. 
In view of proposition \ref{prop:formulenormeG} this means that 
\[
0=C(T)=\|\Gamma_T(x_0,\xi_0)\|_2=\frac{1}{T}\int_0^T \langle a(x_t)y,y\rangle \dd t\vv
\]
which is absurd, so we must have $C_\infty >0$. The weak stabilization assumption implies that $A_a$ has no eigenvalue (except $0$) on the line $\{\mathfrak{Re}(z)=0\}$. It follows that the only possibility for $D_0$ to be zero is that $D_\infty $ is also zero. However we showed that $C_\infty >0$ and $C_\infty \leq -D_\infty$ so we have $-D_0>0$ and, by Theorem \ref{bigtheoreme}, we have strong stabilization.
\end{proof}

With this proof we see why $C_\infty >0 $ is equivalent to \eqref{GCC}, the geometric control condition. In dimension $n=1$ the geometric control condition is equivalent to strong stabilization (\cite{blr92}) which is in turn equivalent to $\alpha>0$. This means that the situation \textbf{(iii)} of Theorem \ref{bigtheoreme} cannot happen when $n=1$. To show that the situation $C_\infty >0$ and $D(0)=0$ does happen we will work on the circle $M=\mathbf R/ 2\pi \mathbf Z$. Let $k>0$ be a fixed integer and set $u_1(t,x)=e^{ikt}\sin(k x)$ and $u_2(t,x)=e^{ikt}\sin(kx+1)$. The function $u$ defined by 
\[
u(t,x)=\begin{pmatrix}
u_1(t,x)\\ u_2(t,x)
\end{pmatrix} \;\text{ is a solution of } \partial_t^2 u - \Delta u =0\pp 
\]
We now define $a(x)$ as the orthogonal projector on $u(0,x)^\perp$, this way we get
\[\forall (t,x)\in \mathbf R \times M,\;\;a(x)\partial_t u(t,x)=ike^{ikt}a(x)u(0,x)=0 \pp\] The function $u$ is thus a solution of the damped wave equation and we see that $ik$ is an eigenvalue of $A_a$. By construction $D_0=0$, however $\mathrm{ker}(a(x))$ is of dimension $1$ and not constant so the geometric control condition is satisfied. This forces $C_\infty$ to be positive and finishes the proof of Theorem \ref{bigtheoreme}. 

\begin{remarque}
Let us emphasize once again that, in the scalar case ($n=1$), the geometric control condition implies $a> 0$ on an open set and thus it also implies weak stabilization. On the other hand, when $n>1$ we can have the geometric control condition and no weak stabilization. This means that when $n=1$ Theorem \ref{stabforte} can be stated without the weak stabilization condition but it is necessary whenever $n>1$.
\end{remarque}

\section{Behavior of $C_\infty$}\label{section4}
\paragraph*{}In this section we are interested in the behavior of $C_\infty $ as a function of the damping term $a$. For this reason we will denote by $C_\infty(a)$ the constant $C_\infty$ associated with the damping term $a$ when needed. In the scalar case, things are pretty simple. If $a$ and $b$ are two damping terms and $\lambda\geq 0$ a real number we have $C_\infty(\lambda a)=\lambda C_\infty(a)$ and $C_\infty(a+b)\geq C_\infty(a)+C_\infty(b)$, this is a direct consequence of \eqref{eq:formuleGscalaire}. Moreover if $a$ and $b$ are such that $a\geq b$ pointwise then $C_\infty(a)\geq C_\infty(b)$. The vector case is more complicated since there is no simple expression for the matrix $G^+_t$. We will thus restrain our self to the study of a one dimensional example. 

\paragraph*{}We will work on the circle $M=\mathbf R/ 2\pi \mathbf Z$.  Using the cocycle formula of $G^+$ it's easy to see that $\lim \frac{-1}{t}\ln (\|G_{t}^+(x,\pm1)\|_2^2)$ does not depends on $x$, which will be taken equal to $0$ from now on. Still using this cocycle formula we see that if $p$ and $q$ are integers then 
\[
C_\infty(a)=\lim_{t\to \infty} \frac{-1}{t}\ln (\|G_{t}^+(0,\pm1)\|_2^2)=\lim_{p\to \infty} \frac{-1}{2p\pi}\ln (\|G_{2p\pi}^+(0,\pm1)\|_2^2)
\] 
and also \[G_{2(p+q)\pi}^+(0,\pm1)=G_{2p\pi}^+((0,\pm1))G_{2q\pi}^+((0,\pm1))\pp\]
Combining all that, we finally find
\[
\begin{array}{rcl}
\displaystyle\lim_{t\to \infty} \frac{-1}{t}\ln \left(\left\|G_{t}^+(x,\pm1)\right\|_2^2\right)&=&\displaystyle\lim_{p\to \infty} \frac{-1}{2p\pi}\ln \left(\left\|\left[G_{2\pi}^+(0,\pm1)\right]^p\right\|_2^2\right)\\
\,&=& \displaystyle \frac{-1}{\pi} \ln\left(\rho\left(G^+_{2\pi}(0,\pm1)\right)\right)
\end{array}
\]
where $\rho(M)$ denotes the spectral radius of the matrix $M$. This equality also shows that the limit do exists and that 
\[
C_\infty(a)=\frac{-1}{\pi}\max\left\{\ln\left(\rho\left(G^+_{2\pi}(0,1)\right)\right);\ln\left(\rho\left(G^+_{2\pi}(0,-1)\right)\right)\right\}\pp
\]
In other words the problem of finding $C_\infty$ is simply reduced to the analysis of two spectral radii. In fact it can be proved that $G^+_{2\pi}(0,1)=G^+_{2\pi}(0,-1)^*$ so there is really only one spectral radius here. To prove this equality it suffices to remark that $G^+_s(x_0,\xi_0)$ and $G_s^+(x_s,-\xi_s)^*$ satisfy the same differential equation. Equivalently, it is easy to prove this equality when $a$ is piecewise constant and by an argument of density the result is also true for every smooth function $a$. Notice that when $n=1$ the two matrices $G_{2\pi}^+(0,1)$ and $G_{2\pi}^+(0,-1)$ are equal but this is not true in the general case since $G^+$ need not be Hermitian. In conclusion we proved that
\begin{equation}\label{Cinftytoymodel}
C_\infty(a)=\frac{-1}{\pi}\ln\left(\rho\left(G^+_{2\pi}(0,1)\right)\right)\pp
\end{equation}

\paragraph*{}We are only going to deal with a particular case of damping terms but it will be general enough to exhibit all the behaviors we want. Take $A_1$, $A_2$ and $A_3$  three positive definite hermitian matrices with their eigenvalues in $(0;1]$, we know there exists three matrices $a_1$, $a_2$ and $a_3$ also definite positives such that $\exp(-a_j)=A_j$. Now take $\psi$ a smooth, non negative cut-off function such that $\int_{S^1}\psi \dd \lambda =1$ and $\supp \psi \subset (0;2\pi/3)$. The damping terms we are interested in are of the form 
\begin{equation}\label{eq:apiecewiseconstant}
a(x)=a_1\psi(x)+a_2\psi(x+2\pi/3)+a_3\psi(x+4\pi/3)
\end{equation} 
and with this condition we simply have $G_{2\pi}^+(0,1)=A_1A_2A_3$ and $G_{2\pi}^+(0,-1)=A_3A_2A_1=G_{2\pi}^+(0,1)^*$. Let us compare $C_\infty(a)$ and $C_\infty(2a)$, according to \eqref{Cinftytoymodel} we have
\[
C_\infty(a)=\frac{-1}{\pi}\ln\left(\rho(A_1A_2A_3)\right) \;\text{ and }\; C_\infty(2a)=\frac{-1}{\pi}\ln\left(\rho(A_1^2A_2^2A_3^2)\right)\pp
\]
If we use a program to randomly generate the $A_j$ it is not hard to find some function $a$ such that $C_\infty(2a)>2C_\infty(a)$ for example with 
\[
A_1=\begin{pmatrix}
0.87     &       0.21 + 0.09i  \\
    0.21 - 0.09i  &  0.51 
\end{pmatrix}\vv \; A_2=\begin{pmatrix}
0.35     &     - 0.23 + 0.08i  \\
  - 0.23 - 0.08i   & 0.61 
\end{pmatrix}\]\[ \text{ and }\, A_3=\begin{pmatrix}
 0.23      &      0.11 - 0.21i  \\
    0.11 + 0.21i  &  0.25 
\end{pmatrix}\pp
\]
It is even possible to have $C_\infty(2a)<C_\infty(a)$, for example with 
\[
A_1=\begin{pmatrix}
0.49   &  0.46 - 0.11i \\ 
    0.46 + 0.11i   & 0.52 
\end{pmatrix}\vv \; A_2=\begin{pmatrix}
0.49     &    - 0.02 + 0.3i  \\
  - 0.02 - 0.3i  &  0.58       
\end{pmatrix}     
\]
\[\text{ and }\, A_3=\begin{pmatrix}
 0.52      &   - 0.3 - 0.33i  \\
  - 0.3 + 0.33i  &  0.37    
\end{pmatrix}\pp\]
\begin{figure}[h!] 
	\centering
		\includegraphics[width=0.91\textwidth]{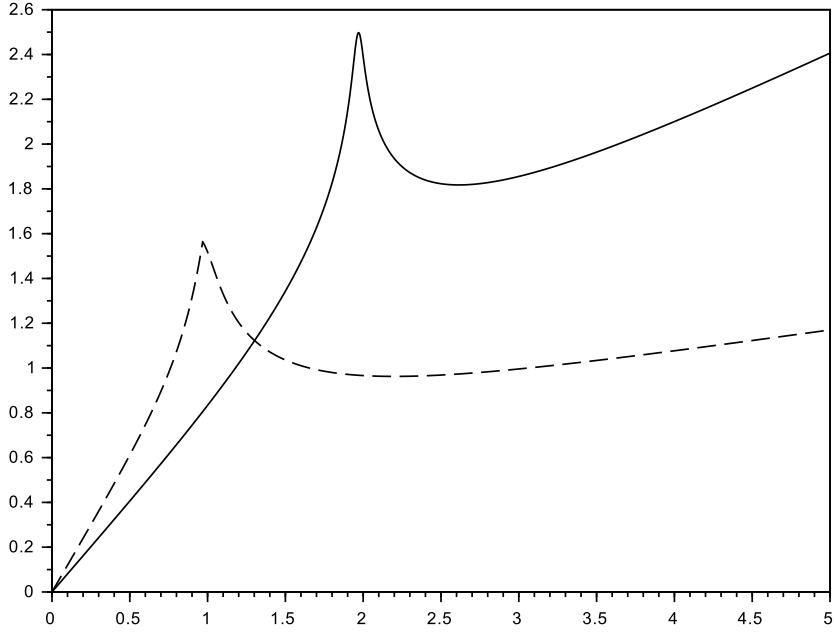}
	\caption{Graphs of $\lambda\mapsto C_\infty(\lambda a)$ for the first example (full line) and the second (dotted line). }
	
\end{figure}
This proves that $C_\infty$ is neither monotonous nor positively homogeneous. Note that even with $A_i\in \mathscr M_n(\mathbf R)$ there are examples of damping terms $a$ such that $C_\infty(2a)<C_\infty(a)$ or $C_\infty(2a)>2C_\infty(a)$. Figure 2 shows the behavior of $\lambda\mapsto C_\infty (\lambda a)$ for the two previous examples. 

We are going to use the same method to study the additivity of $C_\infty$. Assume now that $\supp \psi \subset (0;\pi/2)$, we look at two damping terms defined by 
\[
a(x)=a_1\psi(x)+a_2\psi(x+\pi)\; \text{ and } \; b(x)=b_1\psi(x+\pi/2)+b_2\psi(x+3\pi/2)\pp
\]
By equality \eqref{Cinftytoymodel} we get
\[
C_\infty(a+b)=\frac{-1}{\pi}\ln\left(\rho(A_1B_1A_2B_2)\right)\, \text{ and }\, C_\infty(a)+C_\infty(b)=\frac{-1}{\pi}\left(\ln\left(\rho(A_1A_2)\right)+\ln\left(\rho(B_1B_2)\right)\right)\pp
\]
Then again, using a program to randomly generate the $A_j$ and the $B_j$ it's not hard to find $a$ and $b$ such that $C_\infty(a+b)<C_\infty(a)+C_\infty(b)$, for example with 
\[
A_1=\begin{pmatrix}
0.27     &     - 0.15 - 0.15i  \\
  - 0.15 + 0.15i  &  0.18   
\end{pmatrix}\vv \; A_2=\begin{pmatrix}
   0.31      &     0.25 + 0.3i  \\
    0.25 - 0.3i   & 0.54    
\end{pmatrix}\vv
\]
\[
B_1=\begin{pmatrix}
   0.65      &      0.35 - 0.28i  \\
    0.35 + 0.28i  &  0.38  
\end{pmatrix} \;\text{ and }\; B_2=\begin{pmatrix}
   0.05       &   - 0.04 + 0.05i  \\
  - 0.04 - 0.05i  &  0.08      
\end{pmatrix}
\]
we find $C_\infty (a+b) \approx 1.45$ and $C_\infty(a)+C_\infty(b)\approx 2.99$. Conversely, it is possible to have $C_\infty(a+b)>C_\infty(a)+C_\infty(b)$, for example with 
\[A_1=\begin{pmatrix}
  0.17     &       0.07 - 0.11i  \\
    0.07 + 0.11i   & 0.12        
\end{pmatrix}\vv\; A_2=\begin{pmatrix}
    0.32      &    - 0.09 - 0.35i  \\
  - 0.09 + 0.35i  &  0.61    
\end{pmatrix} \vv\]
\[
 B_1=\begin{pmatrix}
    0.13     &     - 0.19 + 0.04i  \\
  - 0.19 - 0.04i   &  0.4  
\end{pmatrix} \;\text{ and } \; B_2=\begin{pmatrix}
0.18     &       0.01 + 0.13i  \\
    0.01 - 0.13i  &  0.23  
\end{pmatrix}
\]
we find $C_\infty (a+b) \approx 1.87$ and $C_\infty(a)+C_\infty(b)\approx 1.20$.
\paragraph*{}However $C_\infty$ still has some kind of homogeneous behavior as $\lambda$ tends to infinity. Assume for example that $a$ is a piecewise constant function (not necessarily continuous) or that $a$ is of the form \eqref{eq:apiecewiseconstant} but with arbitrarily many $a_i$ instead of only $3$. In this case there exists some positive definite Hermitian matrices $A_i$ with eigenvalues in $(0;1]$ such that 
\[
C_\infty(a)=\frac{-1}{\pi}\ln\left(\rho(A_1\ldots A_j)\right)
\] 
and such that for every real $\lambda\geq 0 $ we have
\[
C_\infty(\lambda a)=\frac{-1}{\pi}\ln\left(\rho(A_1^\lambda \ldots A_j^\lambda)\right)\pp
\]
We are going to prove that in this case $\lim_{\lambda\to\infty} C_\infty(\lambda a)/\lambda $ exists, is non-negative and finite. The first thing to note is that every $A_i^\lambda$ converges to some orthogonal projector $P_i$ so $A_1^\lambda \ldots A_j^\lambda$ converges to $P_1 \ldots P_j$, which has a spectral radius of either $0$ or not. If $\rho(P_1\ldots P_j)=r\neq 0$ then $\rho(A_1^\lambda \ldots A_j^\lambda)$ also converges to $r$ and thus $C_\infty(\lambda a)/\lambda$ converges to $0$. We may thus assume from now on that the spectral radius of $P_1\ldots P_j $ is $0$. Remark that each coefficient of $A_i^\lambda$ is a polynomial in the eigenvalues of $A_i^\lambda$. Let us call $P_\lambda=X^n+\sum_{i=0}^{n-1} b_i(\lambda)X^i $ the characteristic polynomial of $A_1^\lambda\ldots A_j^\lambda$, since the determinant is also a polynomial we get that each coefficient of $P_\lambda$ is a polynomial in the eigenvalues of the matrices $A_i^\lambda$. If $\xi$ is an eigenvalue of $A_i$ then $\xi^\lambda$ is an eigenvalue of $A_i^\lambda$ and so each of the coefficients of $P_\lambda$ can be written as 
\begin{figure}
	\centering
		\includegraphics[width=0.91\textwidth]{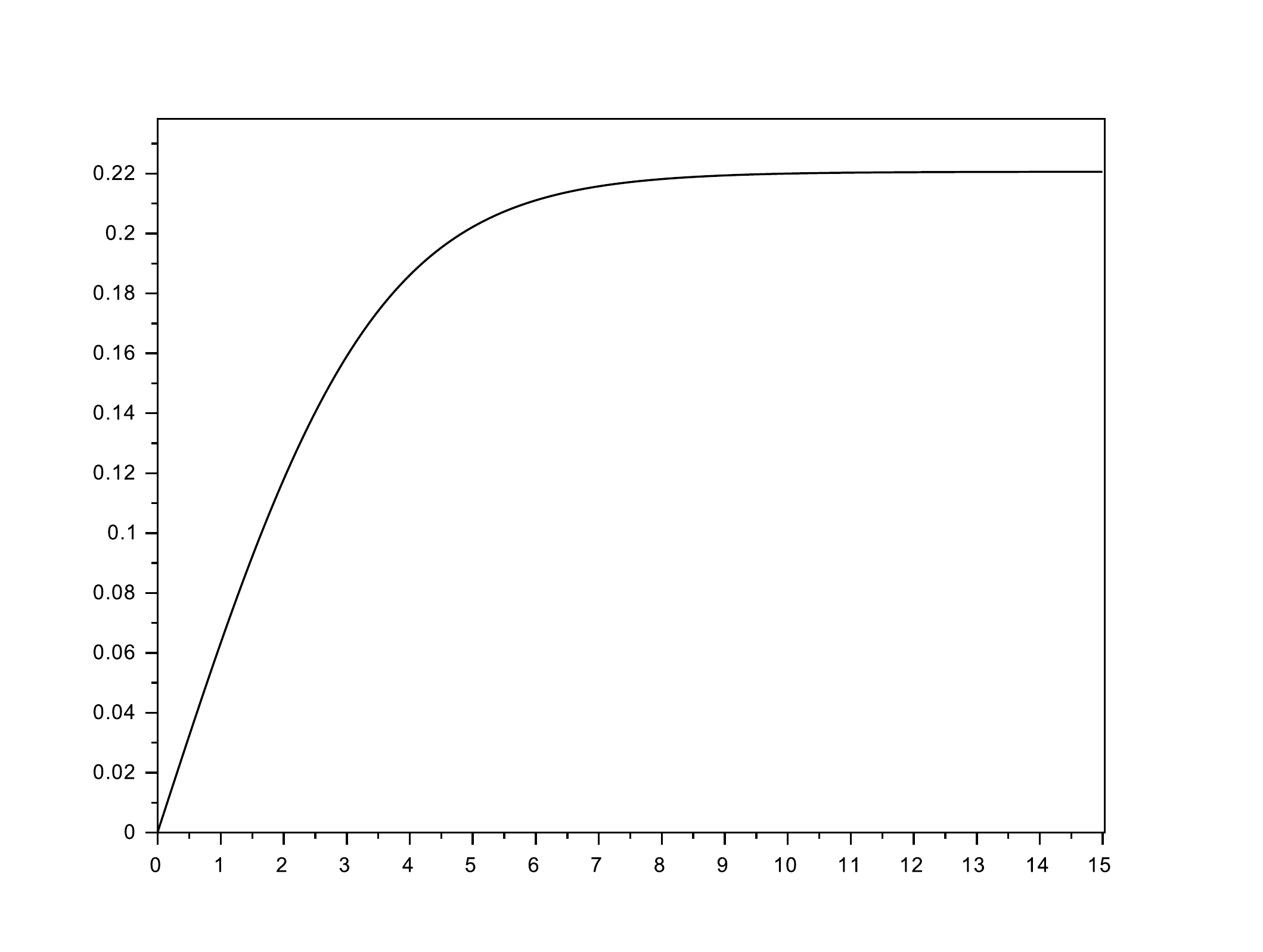}
	\caption{Graph of $\lambda\mapsto C_\infty(\lambda a)$ for some damping term $a$ with $\lim_\lambda C_\infty(\lambda a)/\lambda=0$. }
	
\end{figure}
\[
b_i(\lambda)=\sum_{j=0}^{k_i} c_{i,j} \beta_{i,j}^\lambda \; \text{ with } c_{i,j}\in \mathbf C^* \text{  and } \beta_{i,0}>\beta_{i,1}>\cdots \beta_{i,k} > 0\pp
\]
Since $\rho(A_1^\lambda \ldots A_j^\lambda)$ converges to $0$ we know that $P_\lambda$ converges to $X^n$ and so every $\beta_{i,j}$ must be in in $(0;1)$. Now look at the polynomial $\widehat{P}_\lambda(X)=\gamma^{\lambda n}P(X/\gamma^\lambda)$, we have
\[
\widehat{P}_{\lambda}(X)=X^n+\sum_{i=0}^{n-1}\gamma^{\lambda(n-i)}b_i(\lambda)X^i\; \text{ and}
\]
\[
\gamma^{\lambda(n-i)}b_i(\lambda)=\left(\gamma^{n-i}\beta_{i,0}\right)^\lambda \left(c_{i,0}+\sum_{j=1}^{k_i}  c_{i,j}\left( \frac{\beta_{i,j}}{\beta_{0,j}}\right)^\lambda\right)\pp
\]
  For this reason there exists a unique\footnote{$\gamma=\min_i \beta_{i,0}^{1/(i-n)}$} real number $\gamma>1$ such that $\widehat{P}_\lambda(X)=\gamma^{\lambda n}P(X/\gamma^\lambda)$ converges to some unitary polynomial $Q\neq X^n$. This means that the roots of $\widehat{P}_\lambda$ converge to the roots of $Q$. Let $\xi$ be a root of $Q$ with maximal modulus, recall that $\xi\neq 0$ because $Q\neq X^n$. A complex number $z$ is a root of $P_\lambda$ if and only if $\gamma^\lambda z$ is a root of $\widehat{P}_\lambda$ and these roots are converging to the ones of $Q$. We deduce from this that $\gamma^\lambda\rho(A_1^\lambda \ldots A_j^\lambda)$ converges to $|\xi|$ and we finally have 
\begin{equation}\label{eq:limitecinftyagain}
\lim_{\lambda\to\infty}\frac{C_\infty(\lambda a)}{\lambda}=\frac{-1}{\pi}\ln(\gamma^{-1})
\end{equation}
which is exactly what we wanted.  The very same kind of argument also shows that \[\lim_{\lambda \to 0^+} \frac{C_\infty(\lambda a)}{\lambda}\] exists and is finite. Numerical simulations seems to indicate that we always have
\[
\lim_{\lambda\to \infty} \frac{C_\infty(\lambda a)}{\lambda} \leq \lim_{\lambda\to 0^+} \frac{C_\infty(\lambda a)}{\lambda}
\] 
but the function $\lambda \mapsto C_\infty(\lambda a)/\lambda$ needs not to be monotonous as shown with Figure 4.

\begin{figure}
	\centering
		\includegraphics[width=1\textwidth]{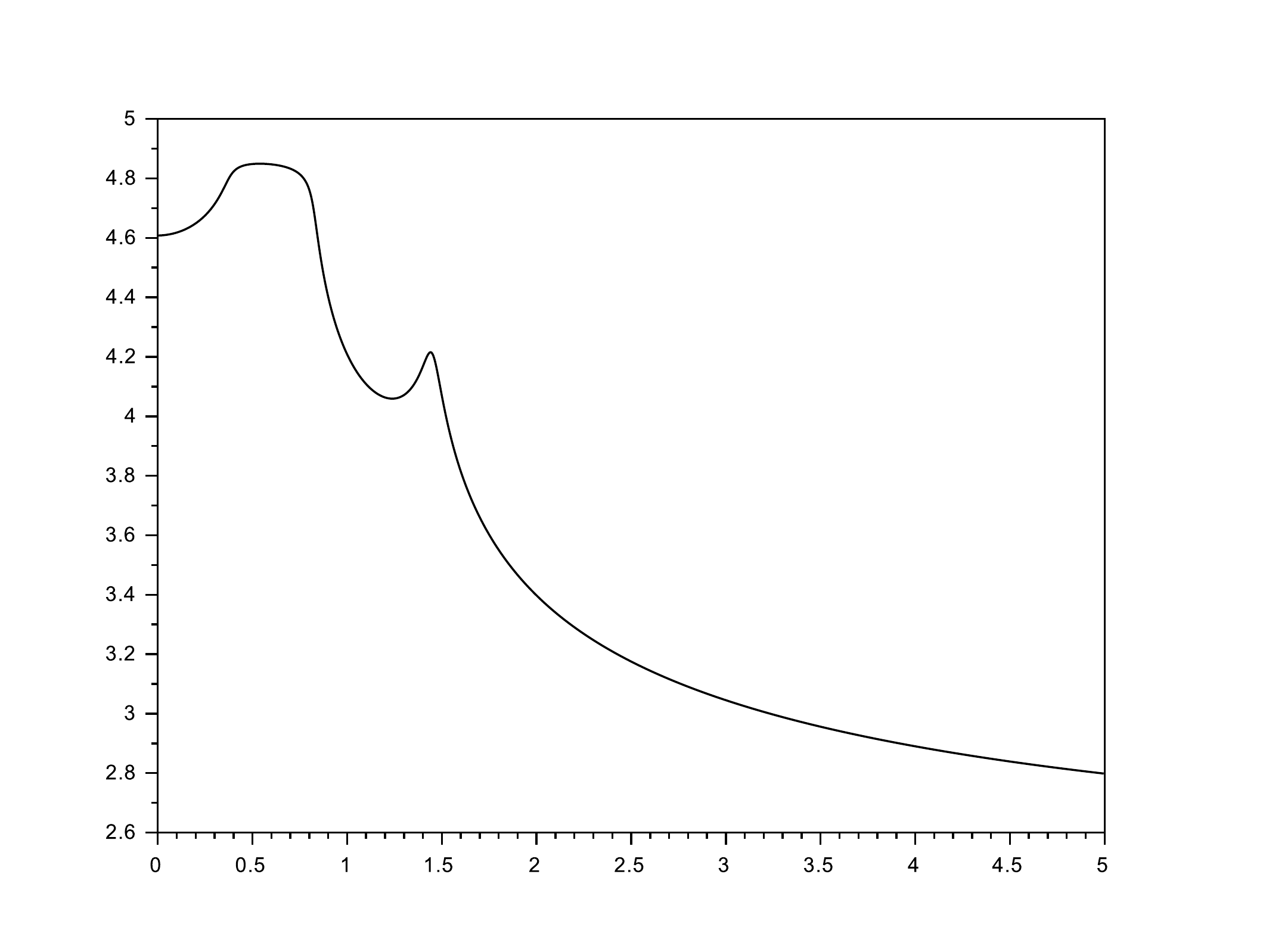}
	\caption{Graph of $\lambda\mapsto C_\infty (\lambda a)/\lambda$ for some damping term $a$ of the form \eqref{eq:apiecewiseconstant} but with five $a_i$ instead of three. Here $\lim_\lambda C_\infty(\lambda a)/\lambda$ is positive.}
	\label{fig:Cinfty}
\end{figure}

\paragraph*{} A very natural thing to do is to ask oneself if property \eqref{eq:limitecinftyagain} is still true in a more general setting, that is, is it still true with any smooth $a$ on a  general manifold ? Unfortunately several difficulties prevent us to answer this question. For example notice that on a general manifold there is no equivalent of the formula \eqref{Cinftytoymodel} and that it is not even clear that $\|a_k-a\|_\infty\to 0$ implies $C_\infty(a_k)\to C_\infty(a)$ on a general manifold. Even on the circle where this is true it does not mean that 
\[
\lim_{k\to \infty} \lim_{\lambda\to \infty} \frac{C_\infty(\lambda a_k)}{\lambda}=\lim_{\lambda\to \infty} \frac{C_\infty(\lambda a)}{\lambda}
\]
and so it is not clear that $\lim_{\lambda\to\infty} C_\infty(\lambda a)/\lambda $ exists for a smooth $a$ even in the simple case of the circle.

\end{document}